\documentclass{article}
\usepackage{amsmath,color,amsfonts,amsthm}
\usepackage{amssymb,enumerate,verbatim,enumitem}
\usepackage{hyperref}
\usepackage[bottom,hang,flushmargin]{footmisc}
\usepackage{tikz}
\usetikzlibrary{calc}

\usepackage{a4wide}

\makeatletter
\newcommand{\globalcolor}[1]{%
  \color{#1}\global\let\default@color\current@color
}
\makeatother

\newtheorem{lemma}{Lemma}[section]
\newtheorem{corollary}[lemma]{Corollary}

\newtheorem{theorem}[lemma]{Theorem}

\theoremstyle{definition}

\addtolength{\textwidth}{1in}
\addtolength{\hoffset}{-0.5in}
\addtolength{\textheight}{0.5in}
\addtolength{\voffset}{-0.7in}

\global\long\def\E{\mathbb{E}}

\global\long\def\P{\mathbb{P}}

\date{}

\title{\vspace{-0.7cm} $C_4$-free subgraphs with large average degree}
\author{R. Montgomery\thanks{School of Mathematics,
University of Birmingham,
Birmingham,
B15 2TT,
UK. r.h.montgomery@bham.ac.uk.}
, A. Pokrovskiy\thanks{Department of Economics, Mathematics, and Statistics, Birkbeck College, University of London. dr.alexey.pokrovskiy@gmail.com.}
, and B. Sudakov\thanks{Department of Mathematics, ETH, 8092 Zurich, Switzerland. benjamin.sudakov@math.ethz.ch. Research supported in part by SNSF grant 200021-175573.}
}

\begin{document}
\maketitle

\begin{abstract}
Motivated by a longstanding conjecture of Thomassen, we study how large the average degree of a graph needs to be to imply that it contains a $C_4$-free subgraph with average degree at least $t$. K\"uhn and Osthus showed that an average degree bound which is double exponential in t is sufficient. We give a short proof of this bound, before reducing it to a single exponential. That is, we show that any graph $G$ with average degree at least $2^{ct^2\log t}$ (for some constant $c>0$) contains a $C_4$-free subgraph with average degree at least $t$. Finally, we give a construction which improves the lower bound for this problem, showing that this initial average degree must be at least $t^{3-o(1)}$.
\end{abstract}

\section{Introduction}
The girth of a graph $G$, denoted $g(G)$, is the length of a shortest cycle in $G$.  A celebrated conjecture of Thomassen~\cite{Thomassen83} from 1983 says that, for each $t$ and $g$, there is some $f(t,g)$ such that every graph $G$ with average degree $d(G) \geq f(t,g)$ contains a subgraph with girth at least $g$ and average degree at least $t$. 

This is straightforward for regular graphs, or, more generally, for graphs whose maximum degree $\Delta(G)$ is at most a constant multiple of $d(G)$. Indeed, given an $n$-vertex $d$-regular graph $G$, take a subgraph $H$ of $G$ by deleting every edge with probability $p=d^{-(g-1)/g}/2$. The expected number of edges in $H$ is $ndp/2=n d^{1/g}/4$ and the expected number of cycles with length at most $g$ in $H$ is at most $nd^{g-1}p^g=n/2^{g-1}$. Thus, deleting a vertex from each short cycle of a typical such $H$ gives a graph with average degree at least $d^{1/g}/4$ and girth larger than $g$.

This argument can, of course, be used for any graph containing an almost regular subgraph with high average degree. Pyber, R\"odl and Szemer\'edi \cite{PRS} showed that any graph $G$ whose average degree is at least logarithmic in $\Delta(G)$ has an $r$-regular subgraph (with $r$ growing together with $d(G)$). This can therefore be used to prove Thomassen's conjecture for such graphs. On the other hand, Pyber, R\"odl and Szemer\'edi \cite{PRS} also proved that there are graphs $G$ with average degree at least $c\log \log \Delta(G)$ that do not contain even a $3$-regular subgraph. This shows that one can not prove Thomassen's conjecture through reduction to the regular case. However, more progress has been made for graphs where the maximum degree is bounded by a function of the average degree. Indeed, Dellamonica and R\"odl \cite{DR} proved the conjecture for each graph $G$ with average degree at least $\alpha (\log \log \Delta(G))^\beta$, for some constants $\alpha$ and $\beta$. Nevertheless, in general Thomassen's conjecture remains widely open.

For general graphs, as it is well known that every graph contains a bipartite (and hence odd cycle free) subgraph with at least half of the edges, Thomassen's conjecture is trivial for each $g\leq 4$. The only non-trivial case of Thomassen's conjecture obtained so far was by the breakthrough paper of K\"uhn and Osthus~\cite{KK04}, who proved the case where $g=6$. More precisely, they showed that, for some constant $c>0$, every graph $G$ with average degree at least $2^{2^{ct^3}}$ contains a subgraph with average degree at least $t$ and girth at least 6. 
An alternative proof of this, with a similar double exponential bound, was later given by Dellamonica, Koubek, Martin, and R\"odl~\cite{DKMV11}. Using their approach, McCarty \cite{McCarty} recently proved that a bipartite graph $G$ with large average degree and no copy of $K_{t,t}$ contains an \emph{induced} $C_4$-free subgraph with many edges. The assumption that $G$ is bipartite can be further removed, using the result from \cite{KLST}, which says that an $H$-free graph with average degree $d$ contains an induced bipartite subgraph with average degree at least $\log^{1-o(1)} d$.

With the establishment of an upper bound for the $g=6$ case of Thomassen's conjecture, it is natural to ask whether this double exponential bound can be improved. In this paper, we study this, starting with a very short proof of the result of K\"uhn and Osthus. Building on this, we then give the following single exponential bound.

\begin{theorem}\label{shortC4free} There exists $c>0$ such that, for each $t\in \mathbb{N}$, every graph with average degree at least $2^{ct^2\log t}$ contains a graph with average degree at least $t$ and girth at least 6.
\end{theorem}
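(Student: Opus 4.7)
The plan is to refine the short proof of the K\"uhn--Osthus double-exponential bound given earlier in the paper, arranging the iteration so that the required starting average degree grows only by a $\mathrm{poly}(t)$ factor at each step, rather than a polynomial in the current average degree (which is what yields a double exponential).

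I would first apply standard sparsification and pigeonholing arguments to reduce to the case of a bipartite graph $G = (A \cup B, E)$ in which all degrees on the $A$-side lie in some interval $[d, 2d]$; this only costs a constant factor in the average degree. I would then iteratively produce bipartite almost-regular subgraphs $G_0 \supset G_1 \supset \dots$ with degree parameters $d = d_0 > d_1 > \dots$, stopping when the codegree structure becomes simple enough to extract a $C_4$-free subgraph of average degree at least $t$ directly. At each step I examine the codegrees of pairs on the smaller side: if every pair has codegree at most $\mathrm{poly}(t)$, I stop and apply a probabilistic edge-deletion argument (keep each edge with some well-chosen probability, then delete one edge per surviving $C_4$) to produce the desired $C_4$-free subgraph. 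Otherwise many pairs have high codegree, and a pigeonhole argument locates a vertex $v$ whose neighborhood $N(v)$ carries a dense almost-regular auxiliary graph, where two vertices of $N(v)$ are joined whenever they share a common neighbor other than $v$. Any $C_4$-free subgraph of average degree $t$ in this auxiliary graph lifts back to one in $G_i$ (via paths of length two through $v$), so one may recurse on the auxiliary graph.

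The main obstacle is controlling the loss per iteration step. In the K\"uhn--Osthus-style approach, the auxiliary graph passed to the next step has degree parameter on the order of $\sqrt{d_i}$, so the recursion only terminates once the initial parameter is doubly exponential in $t$. To obtain a single exponential, I need each step to lose only a factor of $\mathrm{poly}(t)$ in the degree, so that after $\mathrm{poly}(t)$ iterations the starting degree $d_0 = 2^{ct^2 \log t}$ is still large enough to run the probabilistic deletion endgame. Achieving this requires exploiting the almost-regularity of $G_i$ carefully: the vertex $v$ must be chosen (via averaging or a dependent-random-choice-style argument) so that $N(v)$ inherits both almost-regularity and a high typical codegree, and the count of edges in the auxiliary graph must be bounded from below by tight codegree estimates rather than the worst-case Kov\'ari--S\'os--Tur\'an-type bounds. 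Once each step costs only $\mathrm{poly}(t)$ in the degree parameter and the iteration terminates after $O(t)$ or so steps, the total loss aggregates to the stated exponent $O(t^2 \log t)$.
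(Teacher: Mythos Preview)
Your proposal correctly identifies the target --- reduce the per-iteration loss from a power of $d$ to a $\mathrm{poly}(t)$ factor --- but it misses the idea that actually achieves this. The tension is that in Lemma~\ref{twocases}, case~(1) yields a subgraph of average degree $\geq \lambda$ to recurse on, while case~(2) yields codegrees $\leq \lambda$; for the recursion to lose only $\mathrm{poly}(t)$ per step you must take $\lambda \approx d/\mathrm{poly}(t)$, but then the codegree bound in case~(2) is of order $d$, far too weak for an edge-deletion argument on a merely one-sided almost-regular graph to output a $C_4$-free subgraph of average degree $t$. Setting the codegree threshold at $\mathrm{poly}(t)$, as you do, makes the endgame work but renders the recursion useless (the subgraph handed on has average degree only $\mathrm{poly}(t)$). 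The paper resolves this with an extra dichotomy (Lemma~\ref{splitbigdeg}, Corollary~\ref{corbigdiff}) applied \emph{before} Lemma~\ref{twocases} at every step: either $\Delta(G) \leq d(G)^{O(1)}$, in which case the standard random-subgraph argument (Lemmas~\ref{maxdegreduced} and~\ref{randsublocal}) already wins, or $G$ contains a very unbalanced bipartite subgraph with $|A| \geq d^6|B|$. In that unbalanced situation, Lemma~\ref{newdiff} shows that a codegree bound of order $d$ \emph{is} sufficient: one samples \emph{vertices} of $A$ with probability $p \approx d^{-6}$, and the imbalance $|B| \leq p|A|$ prevents the vertex count from exploding. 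This unbalanced-versus-nearly-regular split, not tighter codegree counting or dependent random choice, is the missing ingredient in your plan.

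Separately, your recursion mechanism is not the paper's and, as written, does not work. The paper simply passes to the honest induced bipartite subgraph $G[A',B']$ with $A' \subset N(v)$ coming from case~(1) of Lemma~\ref{twocases}; after $O(t^2)$ iterations one has collected $8t^2$ vertices $v_i$ whose neighbourhoods each contain one side of the final bipartite subgraph, yielding a copy of $K_{s,s}$ with $s = 4t^2$, and Lemma~\ref{bipartitecomp} finishes. Your ``auxiliary graph on $N(v)$ with edges for shared common neighbours, then lift back via paths of length two'' does not produce a useful subgraph of $G$: replacing each auxiliary edge by a length-two path introduces a new middle vertex per edge, so the lifted graph has $|V(H')| + e(H')$ vertices and $2e(H')$ edges, hence average degree at most $4$ regardless of $d(H')$.
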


On the other hand, it is known that there are graphs with average degree $(1+o(1))t^2$ which contain no subgraphs with average degree at least $t$ and girth bigger than 4. Indeed, this follows easily from well-known bounds on the Tur\'an numbers of the $4$-cycle proved by 
Erd\H{o}s, R\'enyi and S\'os \cite{ERS} (see also 10.36 (a) in \cite{lovaszCPE}). They showed that the maximum number of edges in any $n$-vertex $C_4$-free graph is at most $(1/2+o(1))n^{3/2}$. Therefore the complete graph on $n=(1+o(1))t^2$ vertices has average degree $(1+o(1))t^2$ yet no $C_4$-free subgraph with average degree at least $t$. Thus, the bound in Theorem~\ref{shortC4free} must be at least quadratic in $t$. However, the discussion above on techniques for almost regular graphs suggests we should look to graphs with irregular degree sequences to improve this. By giving a new construction, we show the bound in Theorem~\ref{shortC4free} must be at least roughly cubic in $t$, as follows.

\begin{theorem}\label{exampleC4free} There is a constant $c$ such that, for all $t$ there exists a graph with average degree at least $t^3$ yet no $C_4$-free subgraph with average degree at least $ct\log t$.
\end{theorem}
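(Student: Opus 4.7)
The useful benchmark is the complete graph $K_N$ on $N = \Theta((t\log t)^2)$ vertices: it has average degree $\Theta(t^2 \log^2 t)$ and, by the Kővári--Sós--Turán theorem, no $C_4$-free subgraph of average degree exceeding $O(t\log t)$. Theorem~\ref{exampleC4free} asks for an extra multiplicative factor of roughly $t/\log^2 t$ in average degree, so the construction must distribute edges over many more vertices than $K_N$ while remaining sufficiently ``$C_4$-dense'' that no dense $C_4$-free subgraph can hide inside.

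My plan is to build $G$ as a (probabilistically sparsified) blow-up over a $C_4$-free host. The steps are as follows. First, take $H$ to be a $(q+1)$-regular bipartite $C_4$-free graph on $\Theta(q^2)$ vertices, for instance the incidence graph of a projective plane of order $q$. Second, form $G$ by replacing each vertex $u\in V(H)$ with a cluster $V_u$ of $s$ new vertices and each edge of $H$ with the complete bipartite graph $K_{s,s}$ between the corresponding clusters. Third, verify by counting that $d(G) = s(q+1)$, and choose $s$ and $q$ so that $s(q+1) \ge t^3$. Fourth, for any $C_4$-free subgraph $H' \subseteq G$, use that $H$ is $C_4$-free to classify the 4-cycles of $G$: each either lies inside a single $K_{s,s}$, or has two vertices in a common cluster $V_u$ and the other two in two distinct neighbours of $u$. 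In $H'$ this means that for each cluster $V_u$, each pair of vertices in $V_u\cap V(H')$ can share a common $H'$-neighbour in at most one neighbouring cluster. Fifth, combine this cross-cluster constraint (via Jensen's inequality) with the intra-cluster Kővári--Sós--Turán bound $O(s^{3/2})$ on each $H'\cap K_{s,s}$, and apply Cauchy--Schwarz to bound $d(H')$.

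\textbf{Main obstacle.} The plain blow-up above yields only $d(H') \le O(\sqrt{(q+1)s}) = O(\sqrt{d(G)})$, a quadratic ratio---no improvement over the $K_N$ baseline. To reach the cubic ratio in the theorem, the second step has to be refined, most plausibly by replacing each $K_{s,s}$ with a random bipartite graph of density $p$ tuned so that $C_4$-free bipartite subgraphs within each cluster pair have only $O(s\log s)$ edges, rather than $O(s^{3/2})$. The last two steps then need to be redone to account for the extra 4-cycles that the sparsification creates across cluster pairs, and the three parameters $s,q,p$ must be calibrated so that $d(G)\ge t^3$ and $d(H')\le c t\log t$ hold simultaneously. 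This calibration---and the resulting more delicate structural analysis of the sparsified construction---is the hardest part, and precisely where the $\log t$ factor in the theorem's bound enters.
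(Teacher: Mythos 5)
Your plan correctly identifies the blow-up paradigm and correctly classifies the $4$-cycles in a blow-up of a girth-$6$ host, but the step you yourself defer (``calibration'' of $s,q,p$) is not a technicality: it is where the approach breaks. First, in any blow-up of a regular girth-$\geq 6$ host in which adjacent clusters are joined by graphs containing near-perfect matchings (true whp for your random sparsification as soon as $ps\gtrsim \log s$), the union of one matching per blown-up edge is $C_4$-free and has average degree about the host degree $q+1$; so you are forced to take $q+1=O(t\log t)$, and then $d(G)\geq t^3$ forces the density between adjacent clusters to satisfy $ps\gtrsim t^2/\log t$. At such densities your hoped-for within-pair bound is simply false: a random bipartite graph with parts of size $s$ and density $p$ contains $C_4$-free subgraphs with $\Omega(\min(ps^2,s^{4/3}))$ edges (delete an edge from each $4$-cycle when $p\leq s^{-2/3}$, otherwise first subsample edges with probability about $p^{-1}s^{-2/3}$), and also with $\Omega(ps^{3/2})$ edges (intersect with an extremal Reiman graph); both are far larger than $O(s\log s)$ once $ps\gg\log s$. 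Enforcing the $O(s\log s)$ cap forces $ps=O(\log s)$, which pushes $q+1$ back up to roughly $t^3/\log t$ and resurrects the matching-union obstruction (or forces $s$ exponentially large in $t$, in which case each pair graph has so few $4$-cycles that it is essentially $C_4$-free and the per-pair cap is vacuous). Moreover, even importing the true bounds on $C_4$-free subgraphs of random bipartite graphs as a black box (these are hard results of F\"uredi and Morris--Saxton, not consequences of K\H{o}v\'ari--S\'os--Tur\'an), the global argument you sketch---per-pair caps $e_{uw}\leq m$ plus the ``each pair of cluster vertices has a common neighbour in at most one adjacent cluster'' constraint via Jensen and Cauchy--Schwarz---gives per cluster only a degree sum of order $s(q+1)+\min\bigl((q+1)m,\sqrt{(q+1)s^3}\bigr)$, which in every admissible parameter range stays well above $t\log t$ (around $t^{3/2}$ or worse), except in the degenerate regime $q+1=\mathrm{polylog}(t)$ where the blow-up is irrelevant and the entire content is the random-graph Tur\'an number itself.

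The idea you are missing is the one the paper is built on: the host must be highly irregular, not a projective plane. The paper blows up (by complete bipartite graphs $K_{d^2,d^2}$, no sparsification) a Pyber--R\"odl--Szemer\'edi-type bipartite graph $H$ with all degrees $d$ on one side and, crucially, no subgraph $\Gamma$ with $d(\Gamma)\geq c\log d$ and $\Delta(\Gamma)\leq d^4$. In a $C_4$-free subgraph $G'$, the auxiliary graph of cluster pairs in which some vertex has two neighbours across has maximum degree at most $\binom{d^2}{2}\leq d^4$ (two vertices of a cluster have at most one common neighbour in $G'$), hence by the choice of $H$ it is $(c\log d)$-degenerate; a per-cluster Erd\H{o}s--R\'enyi--S\'os count then bounds the contribution of these ``dense'' pairs by $O(d\log d)$, while the remaining pairs contribute at most one edge per vertex per pair, i.e.\ $O(d)$ on average. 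With your regular host nothing prevents the dense-pair auxiliary graph from being all of $H$ (its maximum degree $q+1$ is far below the $C_4$-freeness cap), which is exactly why your method stalls at a $\sqrt{d(G)}$-type bound; and the $\log t$ in the theorem comes from the Pyber--R\"odl--Szemer\'edi threshold $d(\Gamma)\geq c\log\Delta(\Gamma)$, not from any sparsification. As written, the proposal therefore has a genuine gap that cannot be closed along the proposed route.
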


\vspace{0.25cm}
\noindent{\bf Notation.}\, Given a graph $G$ we denote by $\Delta(G)$ the maximum degree of $G$, by $e(G)$ the number of edges of $G$ and by $d(G)$ the average degree of $G$. For a vertex $x$ we denote by $N_G(x)$ the set of neighbours of $x$ and by $N^2_G(x)$ the set of vertices in $G$ of distance two from $x$.
For any pair of vertices $x, y \in V(G)$, the codegree $d_G(x,y)$ is the number of common neighbours of $x$ and $y$ in $G$. Given $A\subset V(G)$ and $x\in V(G)$ we denote by $d(x, A)$ the number of neighbours of $x$ in $A$. Finally, for a pair of disjoint vertex subsets $A, B$ in $G$ we denote by $e_G(A,B)$ the number of edges of $G$ between $A$ and $B$ and by  $G[A,B]$ the induced bipartite subgraph of $G$ with all such edges.

\section{Double exponential upper bound}
In this section, we give a short proof of the following slightly stronger form of the result of K\"uhn and Osthus~\cite{KK04}.

\begin{theorem}\label{veryshortC4free} There exists $c>0$ such that every graph with average degree at least $2^{2^{ct^2}}$ contains a graph with average degree at least $t$ and girth at least 6.
\end{theorem}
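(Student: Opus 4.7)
The plan is to reduce to a bipartite graph of large minimum degree and then extract the $C_4$-free subgraph through an iterative refinement. Given $G$ with $d(G) \geq D := 2^{2^{ct^2}}$, the first step is standard: take a subgraph of minimum degree at least $D/2$, then a bipartite subgraph $H$ of minimum degree at least $D/4$ via a max cut, and finally, using a dyadic pigeonhole on the degree sequences of both sides, pass to a subgraph $H'$ in which all degrees on each side lie in a factor-$2$ window and the minimum degree is still double-exponential in $t$.

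The core of the proof is an iterative refinement of $H'$. I maintain a bipartite subgraph with parts $A_i \cup B_i$ and minimum $A_i$-degree $\delta_i$, applying a dichotomy at each step. In the \emph{structural case}, there exist vertices $a_1, a_2 \in A_i$ with codegree in $B_i$ at least a chosen threshold $k_i$; then setting $B_{i+1} := N_{H_i}(a_1) \cap N_{H_i}(a_2)$, and taking $A_{i+1}$ to be those vertices of $A_i$ with sufficiently many neighbors in $B_{i+1}$, yields a new bipartite subgraph with substantially smaller $B$-side but preserved minimum $A_{i+1}$-degree (up to a small additive loss from the deletions). In the \emph{pseudorandom case}, every pair in $A_i$ has codegree less than $k_i$, so the total number of $C_4$'s in $H_i$ is at most $\binom{|A_i|}{2}\binom{k_i}{2}$, and a standard probabilistic edge-deletion argument (retain each edge with a carefully chosen probability $p$, then delete one edge from each surviving $C_4$) extracts a $C_4$-free subgraph of average degree at least $t$.

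Iterating the structural case shrinks $|B_i|$ at a geometric-in-the-log rate (roughly $|B_{i+1}| \leq |B_i|^{1-\Omega(1)}$ if the threshold $k_i$ is chosen correctly), so the structural case can occur only $O(\log\log|V(G)|) = O(t^2)$ times before we are forced into the pseudorandom case. Throughout the iteration the minimum degree $\delta_i$ drops by only a small amount per step, so it remains comfortably above $t$. The main obstacle I anticipate is the careful balancing of parameters across the iteration: the codegree threshold $k_i$ must be chosen so that the pseudorandom case actually produces average degree $\geq t$ rather than just $\Omega(1)$, while the structural case still shrinks $|B_i|$ quickly enough for $O(t^2)$ rounds to exhaust the reduction. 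The double-exponential bound in the theorem arises precisely because $\log\log D = \Theta(t^2)$ must cover the iteration budget while the codegree threshold $k_i$ and the probability $p$ together consume the remaining slack.
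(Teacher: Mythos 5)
Your high-level plan---a dichotomy between a ``structural'' case (high codegree somewhere) and a ``pseudorandom'' case (all codegrees small), iterated until the pseudorandom case yields a $C_4$-free subgraph via random edge deletion---is in the same spirit as the paper. The pseudorandom half of the argument is sound and closely matches the paper's Lemma~\ref{newprobmethod}. However, the structural case as you describe it has a genuine gap. Passing to $B_{i+1}:=N(a_1)\cap N(a_2)$ when $d(a_1,a_2)\geq k_i$ gives you a \emph{lower} bound $|B_{i+1}|\geq k_i$, not the upper bound $|B_{i+1}|\leq |B_i|^{1-\Omega(1)}$ that your termination argument needs. No choice of $k_i$ fixes this: in a dense graph such as $K_{n,n}$, every pair has codegree $n=|B_i|$, so $B_{i+1}=B_i$ and nothing shrinks. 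Worse, your budget $O(\log\log|V(G)|)=O(t^2)$ is not a valid identity; a graph of average degree $2^{2^{ct^2}}$ can have arbitrarily many vertices, so $\log\log|V(G)|$ is unbounded in $t$, and even after a near-regularity reduction the $B$-side is only bounded by the maximum degree, which dyadic pigeonholing alone does not control (it can cost $\log|V(G)|$ factors). Finally, the claim that the minimum $A_{i+1}$-degree into $B_{i+1}$ is preserved is unexamined: after shrinking to a small common neighbourhood, a typical vertex of $A_i$ may see very few vertices of $B_{i+1}$.

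The paper resolves exactly these difficulties differently. Its key structural step (Lemma~\ref{twocases}) is a potential argument over the vertices of $B$ that either produces a subgraph $G[A',B']$ with $A'\subset N(v)$ for a \emph{single} vertex $v$ and average degree at least $\lambda$, or produces a spanning subgraph with all $B$-side codegrees bounded by $\lambda$ and average degree only a $(\lambda+1)$-factor smaller; this gives quantitative control in both branches without any unjustified shrinkage claim. Moreover, the iteration is run for a \emph{fixed} number of steps $8t^2$ determined by $t$ alone, and the fallback when the structural case keeps recurring is not shrinkage but accumulation: after $8t^2$ steps, at least $4t^2$ of the chosen vertices $v_j$ all have $A_{2s}\subset N(v_j)$, which together with $|A_{2s}|\geq 4t^2$ yields a $K_{s,s}$ with $s=4t^2$, and Reiman's projective-plane construction (Lemma~\ref{bipartitecomp}) then gives a $C_4$-free subgraph of average degree $t$. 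Your proposal is missing both the codegree-cleaning lemma that replaces the false shrinkage claim and the $K_{s,s}$ fallback that makes a fixed iteration budget sufficient.
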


Since, as is well known, every graph contains a bipartite subgraph with at least half of the edges, we can assume that $G$ is bipartite. That is, we can assume the initial graph $G$ has girth at least four and no odd cycles. In order to obtain a subgraph of $G$ with girth at least 6, we need only to find a subgraph which has no $4$-cycles, that is, which is $C_4$-free.
The main idea of the proof (inspired by that of K\"uhn and Osthus) is to find either find a dense subgraph of $G$ with small codegrees, and hence few $4$-cycles, or a large complete bipartite subgraph of $G$. In the first case a further random subgraph is likely to be still dense but have no $4$-cycles, and in the second case we use the following well known 
construction of Reiman for Zarankiewicz's  problem (see 10.15a in \cite{lovaszCPE}). It is based on a projective plane and gives a dense $C_4$-free subgraph of the complete bipartite graph.

\begin{lemma}\label{bipartitecomp} If $s=4k^2$, then $K_{s,s}$ has a $C_4$-free subgraph with average degree at least $k$.
\end{lemma}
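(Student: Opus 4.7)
The plan is to realise the desired subgraph as the point-line incidence graph of a projective plane $\mathrm{PG}(2,q)$ for a suitably chosen prime $q$. First I would invoke Bertrand's postulate to pick a prime $q$ with $k \leq q \leq 2k-1$, which exists for all $k \geq 2$; the trivial cases $k \leq 1$ just use a single edge of $K_{s,s}$.

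Given such $q$, the standard facts about $\mathrm{PG}(2,q)$ that I would use are: it has $q^2+q+1$ points and the same number of lines, each point lies on exactly $q+1$ lines, each line contains exactly $q+1$ points, and any two distinct points lie on a unique common line. I would then let $H$ be the bipartite graph whose two parts are the points and the lines of $\mathrm{PG}(2,q)$, with a point joined to a line precisely when they are incident. Then $H$ is $(q+1)$-regular. A $C_4$ in $H$ would consist of two distinct points both lying on two distinct lines, which contradicts the uniqueness of the line through two points, so $H$ is $C_4$-free.

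Finally, I would check the parameters. For $q \leq 2k-1$, each part of $H$ has size
$q^2+q+1 \leq (2k-1)^2 + (2k-1) + 1 = 4k^2 - 2k + 1 \leq 4k^2 = s$,
so $H$ embeds as a subgraph of $K_{s,s}$ (padding with isolated vertices if one insists on using the whole vertex set does not change anything). The average degree of $H$ is $q+1 \geq k+1 \geq k$, as required.

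There is no real obstacle here: the argument is the textbook Reiman/projective-plane construction combined with a routine invocation of Bertrand's postulate. The only minor care needed is choosing the range of $q$ so that both $q+1 \geq k$ and $q^2+q+1 \leq 4k^2$ hold simultaneously, which is why I take $q \in [k, 2k-1]$ rather than the slightly wider interval $(k, 2k)$ given by Bertrand.
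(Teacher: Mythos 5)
Your proposal is correct and is essentially the construction the paper itself relies on: the lemma is stated there with a pointer to Reiman's projective-plane construction (Lov\'asz, Problem 10.15a), and your argument -- take a prime $q\in[k,2k-1]$ via Bertrand's postulate and use the $C_4$-free, $(q+1)$-regular point--line incidence graph of $\mathrm{PG}(2,q)$, which fits inside $K_{s,s}$ since $q^2+q+1\leq 4k^2$ -- is exactly that, carried out in full. One small caution: your parenthetical that padding with isolated vertices ``does not change anything'' is not literally true (it would lower the average degree), but it is also unnecessary, since a subgraph of $K_{s,s}$ need not be spanning and the average degree is computed on the subgraph's own vertex set.
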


The next lemma, which is a main step in the proof, finds either a dense subgraph with small codegrees or can be used to build  a large complete bipartite subgraph, one vertex at a time.

\begin{lemma}\label{twocases} Let $G$ be a bipartite graph with vertex classes $A$ and $B$ and let $\lambda\geq 1$. Then, either
  \begin{enumerate}[label = (\arabic{enumi})]
    \item there is some vertex $v\in B$ and sets $A'\subset N(v)$ and $B'\subset B\setminus \{v\}$ such that $d(G[A',B'])\geq \lambda $, or
    \item there is a spanning subgraph $H\subset G$ with $d(H)\geq d(G)/(\lambda +1)$ and $d_H(x,y)\leq \lambda$, for all $x,y\in B$.
  \end{enumerate}
\end{lemma}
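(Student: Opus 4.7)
My plan is to argue via a maximum codegree-bounded subgraph together with pigeonhole on a carefully defined witness map. Let $H$ be a spanning subgraph of $G$ with $d_H(x,y)\leq \lambda$ for all $x,y\in B$ that maximizes $e(H)$. If $e(H)\geq e(G)/(\lambda+1)$ then $H$ witnesses $(2)$ and we are done, so assume $|R|>\lambda\cdot e(H)$ where $R:=E(G)\setminus E(H)$, and aim to deduce $(1)$.

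For each edge $av\in R$ (with $a\in A$ and $v\in B$), maximality of $H$ means $H+av$ contains a pair with codegree exceeding $\lambda$. Since adding $av$ only changes the neighbourhood of $v$ (by inserting $a$), the only affected codegrees are those between $v$ and some $u\in B$, and the violation forces $au\in H$ together with $|N_H(u)\cap N_H(v)|=\lambda$ (exactly). I pick such a witness $u=\phi(av)\in B\setminus\{v\}$ for each $av\in R$.

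Now I count preimages of $\phi\colon R\to B$. Because $e(H)=\sum_{u\in B} d_H(u)$ and $|R|>\lambda\cdot e(H)$, some $u^*\in B$ satisfies $|\phi^{-1}(u^*)|>\lambda\cdot d_H(u^*)$. Setting $v=u^*$, $A':=N_H(u^*)\subset N_G(u^*)$, and $B':=\{w\in B\setminus\{u^*\}: |N_H(u^*)\cap N_H(w)|=\lambda\}$: each $w\in B'$ contributes exactly $\lambda$ edges of $H$ inside $A'\times\{w\}$, giving $\lambda|B'|$ edges from $H$, while each of the $|\phi^{-1}(u^*)|$ pairs $(a,w)\in\phi^{-1}(u^*)$ gives a distinct edge of $G\setminus H$ inside $A'\times B'$. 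Hence
\[
e(G[A',B'])\geq \lambda|B'|+|\phi^{-1}(u^*)|>\lambda\bigl(|A'|+|B'|\bigr),
\]
so $d(G[A',B'])>2\lambda\geq\lambda$, which gives $(1)$.

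The main technical point is the exact-equality characterisation $|N_H(u)\cap N_H(v)|=\lambda$ extracted from the maximality of $H$: this is precisely what lets me simultaneously lower-bound the $H$-edges and the $R$-edges inside $G[A',B']$ and push its average degree past $\lambda$. Everything else is a short pigeonhole.
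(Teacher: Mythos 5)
Your proof is correct, but it takes a genuinely different route from the paper's. You choose a \emph{maximum} spanning subgraph $H$ with all codegrees at most $\lambda$ and then deduce case (1) by a pigeonhole on a witness map $\phi\colon R\to B$, whereas the paper constructs its subgraph by an explicit iterative edge-deletion procedure, processing $B=\{v_1,\ldots,v_n\}$ one vertex at a time, and stops with (1) the moment a dense bipartite piece between $N(v_i)$ and a high-codegree set $V_i$ appears. Your extremal argument is arguably slicker: the key observation that maximality of $H$ forces $d_H(u,v)$ to equal $\lambda$ exactly (and $a\in N_H(u)$) for every missing edge $av$ is precisely what lets you lower-bound both the $H$-edges ($\lambda|B'|$) and the $R$-edges ($|\phi^{-1}(u^*)|>\lambda|A'|$) inside $G[A',B']$ simultaneously, giving $d(G[A',B'])>2\lambda$. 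The paper's iterative version is slightly longer to state, but has the mild bonus of giving strict codegree bound $d_H(x,y)<\lambda$ and does not need the extremal choice. One small technical caveat in your write-up: the ``exactly $\lambda$'' claim tacitly assumes $\lambda\in\mathbb{Z}$; for general real $\lambda\geq 1$ the forced codegree is $\lfloor\lambda\rfloor$, but since $\lfloor\lambda\rfloor\geq\lambda/2$ one still gets $e(G[A',B'])>\lfloor\lambda\rfloor|B'|+\lambda|A'|\geq\lambda(|A'|+|B'|)/2$, so the conclusion $d(G[A',B'])>\lambda$ survives. Both proofs establish the lemma as stated.
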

\begin{proof}
Let $G=G_0$, $n=|B|$ and label $B=\{v_1,\ldots,v_n\}$. For each $i=1,\ldots,n$, repeat the following. Given a spanning  subgraph $G_{i-1}$, let $V_i$ be the set of all vertices in $\{v_{i+1},\ldots,v_n\}$ which have at least $\lambda$ neighbours in the set $N_{G_{i-1}}(v_i)$. By definition, there are at least
$\lambda|V_i|$ edges of $G_{i-1}$ from $V_i$ to $N_{G_{i-1}}(v_i)$. If in addition $e_{G_{i-1}}(V_i,N_{G_{i-1}}(v_i))\geq \lambda d_{G_{i-1}}(v_i)$, then the induced subgraph $G_{i-1}[V_i,N_{G_{i-1}}(v_i)]$ has average degree at least $\lambda $. Therefore, (1) holds with $v=v_i$, $A'=N_{G_{i-1}}(v_i)$ and $B'=V_i$. Otherwise, if $e_{G_{i-1}}(V_i,N_{G_{i-1}}(v_i))< \lambda d_{G_{i-1}}(v_i)$, remove all the edges of $G_{i-1}$ between $V_i$ and $N_{G_{i-1}}(v_i)$ to form $G_i$. Note that $v_i\notin V_i$, so that no edges adjacent to $v_i$ are removed in this operation.

Suppose that (1) never holds, so that the process terminates with $H:=G_n\subset G_{n-1}\subset\ldots\subset G_0=G$. Now, for every $i<j$, by construction of $G_i$ we have $d_{G_i}(v_i,v_j)<\lambda$. Thus $d_H(v_i,v_j)<\lambda $ for each $i\neq j$. Most importantly, edges incident to $v_i$ in $G_{i-1}$ are never subsequently removed. Indeed, in step $i$ no edges incident to $v_i$ are removed (as $v_i\notin V_i$), and, after step $i$, $v_i$ has codegrees less than $\lambda$ with all $v_j, j>i$ and we only remove edges incident to pairs with high codegree. Therefore, $d_{G_{i}}(v_i)=d_{G_{i-1}}(v_i)$.

Since we removed fewer than $\lambda d_{G_{i-1}}(v_i)$ edges from $G_{i-1}$ to get $G_i$ we have $d_{H}(v_{i})= d_{G_{i-1}}(v_{i})> (e(G_{i-1})-e(G_i))/\lambda$.
Therefore,
\[
e(H)=\sum_{i\in [n]}d_{H}(v_{i})> \sum_{i\in [n]}\frac{e(G_{i-1})-e({G_i})}{\lambda}=\frac{e(G)-e(H)}{\lambda }.
\]
Hence, $e(H)> e(G)/(\lambda +1)$, and thus $d(H)> d(G)/(\lambda +1)$. I.e., $H$ satisfies (2).
\end{proof}

If a graph has small codegrees yet many edges, a typical random subgraph will be $C_4$-free with large average degree, as follows.

\begin{lemma}\label{newprobmethod} There is some $d_0$ such that the following holds for each $d\geq d_0$.
Let $G$ be a bipartite graph with vertex classes $A$ and $B$, with $|A|\geq |B|$, $d_G(x,y)\leq d^{1/5}$ for each $x,y\in B$ with $x\neq y$, $d(x)\leq d$ for each $x\in A$ and $d(G)\geq d^{3/4}$. Then, $G$ contains a $C_4$-free subgraph with average degree at least $d^{1/4}$.
\end{lemma}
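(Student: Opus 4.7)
My plan is to apply the standard deletion method: sparsify $G$ by keeping each edge independently with some probability $p$, and then delete one edge from each surviving $4$-cycle. The parameter $p$ has to be chosen so that the expected number of surviving $C_4$s is much smaller than the expected number of edges; the hypotheses on codegree and maximum degree should make this balance achievable.

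First I would count the $4$-cycles of $G$. Each $C_4$ in the bipartite graph $G$ is determined by a pair $\{x,y\} \subset B$ together with a pair of their common neighbours in $A$, so $\#C_4(G) = \sum_{\{x,y\}\subset B}\binom{d_G(x,y)}{2}$. I would bound one factor using the codegree hypothesis $d_G(x,y) \leq d^{1/5}$, and apply the double-counting identity $\sum_{\{x,y\}\subset B} d_G(x,y) = \sum_{a\in A}\binom{d_G(a)}{2}$ together with $d_G(a) \leq d$ to the remaining factor. This gives
\[
\#C_4(G) \leq \frac{d^{1/5}}{2}\sum_{a\in A}\binom{d_G(a)}{2} \leq \frac{d^{6/5}}{4}\,e(G).
\]

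Next I would set $p = d^{-2/5}$ and let $H$ be the random subgraph. Then $\E[e(H)] = p\,e(G)$ while $\E[\#C_4(H)] = p^4\,\#C_4(G) \leq \tfrac{1}{4}p\,e(G)$. Removing one edge from each surviving $C_4$ yields a $C_4$-free subgraph $H'$ with $\E[e(H')] \geq \tfrac{3}{4}p\,e(G)$. Since $|V(H')| \leq |V(G)|$ and $e(G) \geq d^{3/4}|V(G)|/2$, linearity of expectation gives $\E[d(H')] \geq \tfrac{3}{4}p\,d(G) \geq \tfrac{3}{4}d^{7/20}$, which exceeds $d^{1/4}$ once $d \geq d_0$ for a suitable absolute constant $d_0$. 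Picking a realisation attaining the expectation produces the desired subgraph.

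The calculation itself is routine; the only delicate point is that the exponents are tight. The codegree bound $d^{1/5}$ combined with the maximum-degree bound $d$ produces a factor of $d^{6/5}$ in the $C_4$ count, forcing $p \asymp d^{-2/5}$, and the resulting average degree $p\,d(G) \asymp d^{7/20}$ only narrowly exceeds the target $d^{1/4}$. The hypothesis exponents $1/5$ and $3/4$ appear to be calibrated precisely so that this probabilistic balance goes through.
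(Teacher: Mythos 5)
Your proof is correct and follows essentially the same route as the paper: bound the number of $4$-cycles using the codegree bound $d^{1/5}$ and the degree bound $d$ on $A$, sparsify with a suitable edge probability, and delete an edge from each surviving $C_4$, comparing expectations. The only differences are cosmetic (you count $C_4$s globally via $\sum_{\{x,y\}\subset B}\binom{d_G(x,y)}{2}$ rather than per vertex of $A$, and take $p=d^{-2/5}$ instead of $3d^{-1/2}$), and your calculation goes through.
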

\begin{proof}
Let $n=|A|$ and note that $n\leq |G|\leq 2n$. For each $v\in A$, there are at most $d^2$ distinct pairs of vertices $x,y\in N(v)$, and hence at most $d^{1/5}\cdot d^2=d^{11/5}$ copies of $C_4$ in $G$ containing $v$. Thus, $G$ contains at most $nd^{11/5}$ copies of $C_4$.
Let $H$ be a random subgraph of $G$ formed by including each edge independently at random with probability $p=3d^{-1/2}$.
Let $X$ be the number of copies of $C_4$ in $H$. As $e(G)\geq n\cdot d(G)/2\geq nd^{3/4}/2$, we have
\[
\E\big(e(H)- X\big)\geq p\cdot e(G)-nd^{11/5}p^4 \geq 3nd^{1/4}/2-81 nd^{1/5}\geq nd^{1/4}.
\]
Thus, there is some subgraph $H\subset G$ with $e(H)-X\geq nd^{1/4}$. As $|G|\leq 2n$, removing an edge from each $C_4$ in $H$ thus gives a $C_4$-free subgraph with average degree at least $d^{1/4}$.
\end{proof}
We apply this lemma through the following corollary.
\begin{corollary}\label{cortwocasesnew} There is some $d_0$ such that the following holds for each $d\geq d_0$.
  Let $G$ be a bipartite graph with average degree $d\geq t^4$ which contains no $C_4$-free graph with average degree at least $t$. Then, there is some vertex $v\in V(G)$ and sets $A'\subset N(v)$ and $B'\subset V(G)\setminus \{v\}$ such that $d(G[A',B'])\geq d^{1/5}$.
\end{corollary}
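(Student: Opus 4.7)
The plan is to argue by contradiction. Suppose the conclusion fails, so that $d(G[A',B']) < d^{1/5}$ for every $v \in V(G)$ and every $A' \subseteq N(v)$, $B' \subseteq V(G) \setminus \{v\}$, while by hypothesis $G$ still contains no $C_4$-free subgraph of average degree at least $t$.

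First, I apply Lemma~\ref{twocases} to $G$ with $\lambda = d^{1/5}$, taking $B$ to be the smaller of the two parts in the bipartition. Any instance of case~(1) of that lemma is a particular instance of the (now-failed) conclusion of the corollary, so case~(2) must hold: there is a spanning subgraph $H \subseteq G$ with
\[
d(H) \geq \frac{d}{d^{1/5}+1} \geq \frac{d^{4/5}}{2}
\]
and $d_H(x,y) \leq d^{1/5}$ for every pair of distinct $x,y \in B$.

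Next, I want to feed $H$ into Lemma~\ref{newprobmethod} using the same labelling of vertex classes. The choice of $B$ as the smaller side gives $|A| \geq |B|$; the codegree hypothesis on $B$ is exactly what the previous step provides; and for $d$ sufficiently large $d(H) \geq d^{4/5}/2 \geq d^{3/4}$. The lemma then produces a $C_4$-free subgraph of $H \subseteq G$ with average degree at least $d^{1/4} \geq t$, contradicting our standing assumption.

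The main obstacle is the remaining hypothesis of Lemma~\ref{newprobmethod}, namely that $d_H(x) \leq d$ for every $x \in A$, which is not automatic from case~(2). I would dispose of it by a dichotomy: if the condition already holds, apply Lemma~\ref{newprobmethod} directly. Otherwise, pick some $v \in A$ with $d_H(v) > d$ and aim to show that $H[N_H(v), A \setminus \{v\}]$ has average degree at least $d^{1/5}$, which would itself be an instance of case~(1) of the corollary. The edge count of this bipartite subgraph equals $\sum_{w \in N_H(v)}(d_H(w)-1)$, so the real task is to combine the codegree bound on $B$ with the density of $H$ (and, if needed, a preliminary reduction to a subgraph of $G$ whose $A$-degrees lie in a bounded dyadic range) to turn this into the required lower bound. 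Balancing the quantitative parameters across the two branches of this dichotomy is where I expect most of the work to lie.
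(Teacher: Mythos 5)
The high-level structure of your proposal (apply Lemma~\ref{twocases} with $\lambda=d^{1/5}$, then feed case~(2) into Lemma~\ref{newprobmethod}) matches the paper's, and you correctly pinpoint the obstruction: Lemma~\ref{twocases} does not supply the hypothesis $d_H(x)\leq d$ for $x\in A$ that Lemma~\ref{newprobmethod} needs. However, the dichotomy you propose to close this gap does not work, and the gap is genuine.

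Concretely, suppose $v\in A$ has $d_H(v)>d$. You want $H[N_H(v), A\setminus\{v\}]$ to be dense, but nothing forces the neighbours of $v$ in $B$ to have any further neighbours: it is entirely consistent with $d(G)=d$ and with the codegree bound $d_H(x,y)\leq d^{1/5}$ on $B$ that every $w\in N_H(v)$ has $d_H(w)=1$, in which case $e(H[N_H(v),A\setminus\{v\}])=\sum_{w\in N_H(v)}(d_H(w)-1)=0$ and the fallback branch collapses. The codegree bound controls pairs in $B$, not degrees of single vertices in $B$, so it cannot be used to rescue this. The ``dyadic range'' reduction you gesture at has a further problem: a dyadic decomposition of $A$-degrees costs a factor of $\log\Delta(G)$, and $\Delta(G)$ is unbounded in terms of $d$, so the density loss is not controllable.

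The paper's fix is an ordering trick: it bounds the $A$-degrees \emph{before} invoking Lemma~\ref{twocases}, not after. Starting from $G$, one first iteratively deletes vertices of degree $<d/2$ to obtain $G_0$ with $\delta(G_0)\geq d/2$ and $d(G_0)\geq d$; taking $A$ to be the larger class, one then keeps only $\lceil d/2\rceil$ edges per $A$-vertex to form $G_1$, so $d_{G_1}(v)\leq d$ for all $v\in A$ while $d(G_1)\geq d/2$. Since case~(2) of Lemma~\ref{twocases} produces a \emph{spanning} subgraph $H\subseteq G_1$, the bound $d_H(v)\leq d_{G_1}(v)\leq d$ is inherited automatically, and $d(H)\geq (d/2)/(d^{1/5}+1)\geq d^{3/4}$ for large $d$. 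This single preprocessing step replaces the entire dichotomy you were worried about, and is the missing idea.
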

\begin{proof} 
Remove from $G$ one by one vertices of degree less than $d/2$. This does not decrease its average degree, and produces a subgraph $G_0\subset G$ with $d(G_0)\geq d(G)$ and $\delta(G_0)\geq d/2$. Suppose this (bipartite) subgraph has vertex classes $A$ and $B$ with $|A|\geq |B|$. For each $v\in A$, select $\lceil d/2\rceil$ incident edges and add them to $G_1$. Then, $e(G_1)\geq d|A|/2$ and  $d_{G_1}(v) \leq d$, for each $v\in A$.
By Lemma~\ref{twocases} with $\lambda=d^{1/5}$, if $G_1$ does not satisfy the assertion of the corollary, then $G_1$ contains a spanning subgraph $H\subset G_1$ with $d(H)\geq d(G_1)/(d^{1/5}+1)\geq d^{3/4}$ and $d_H(x,y)\leq d^{1/5}$, for all $x,y \in B$. Then, by Lemma~\ref{newprobmethod}, $H$ contains a $C_4$-free subgraph with average degree at least $d^{1/4}\geq t$, a contradiction.
\end{proof}
Applying this corollary iteratively, we can now prove Theorem~\ref{veryshortC4free}.

\begin{proof}[Proof of Theorem~\ref{veryshortC4free}] Suppose $t$ is large enough that Corollary~\ref{cortwocasesnew} holds for each $d\geq t^{5^{t^2}}$.
Suppose $G$ is bipartite, with average degree at least $t^{5^{9t^2}}$, and let $G_0=G$. Suppose, for contradiction, that $G_0$ contains no $C_4$-free subgraph with average degree at least $t$, and note that $d(G_0)\geq t^4$.

For each $i=1,\ldots,8t^2$, by Corollary~\ref{cortwocasesnew}, we can find a vertex $v_i$ and disjoint sets $A_i\subset N(v_i)$ and $B_i\subset V(G_{i-1})\setminus \{v_i\}$ and a   graph $G_i=G_{i-1}[A_i,B_i]$ with average degree at least $t^{5^{9t^2-i}}$. Moreover, note that, for each $j<i$, $A_i\subset N(v_j)$ or $B_i\subset N(v_j)$.

Let $s=4t^2$. Thus, we have vertices $v_1,\ldots,v_{2s}$ and a graph $G_{2s}$ with average degree at least $t^{5^{t^2}}>s$ and vertex sets $A_{2s}$ and $B_{2s}$ such that, for each $i\in [s]$, either $A_{2s}\subset N(v_i)$ or $B_{2s}\subset N(v_i)$.
Relabelling, we can assume that we have vertices $v_1,\ldots,v_s$ with $A_{2s}\subset N(v_i)$ for each $i\in [s]$. As $d(G_{2s})\geq s$, $|A_{2s}|\geq s$, and therefore $G_{2s}$, and hence $G$, contains a copy of $K_{s,s}$. Then, by Lemma~\ref{bipartitecomp}, $G$ contains a $C_4$-free subgraph with average degree at least $t$, a contradiction.
\end{proof}

\section{Proof of the main result}
In this section we prove Theorem \ref{shortC4free}. We still use an iterative procedure which finds either a dense $C_4$-free subgraph or makes progress towards a complete bipartite graph (cf.\ Corollary~\ref{cortwocasesnew}). However, now the average degree will decrease much less on each iteration. Instead of passing from average degree $d$ to average degree at least $d^{1/5}$ (which led to our double exponential bound), the average degree $d$ decreases to only average degree at least $d/50t$ at each iteration, where $t$ is the average degree we are aiming for. To do this we wish to apply Lemma~\ref{twocases} to a graph $G$ with $\lambda=d(G)/50t$ instead of $\lambda=d(G)^{1/5}$. If (1) in Lemma~\ref{twocases} holds, we can iterate as before. If (2) holds, we need to do more work. In general, the conditions in (2) are not strong enough for our techniques to find the required dense $C_4$-free subgraph, but we can do this if, in addition, the subgraph is a very unbalanced bipartite graph (see Lemma~\ref{newdiff}). Therefore, as this subgraph is always spanning, we ensure this by only applying Lemma~\ref{twocases} to very unbalanced bipartite graphs. Fortunately, in each dense bipartite graph we can find either a very unbalanced dense bipartite subgraph, or a dense bipartite subgraph whose maximum degree is at most polynomial in the average degree (see Lemma~\ref{splitbigdeg}). This latter case can be solved using the techniques for nearly regular graphs discussed in the introduction (see also Lemma~\ref{randsublocal}). Therefore, by applying Lemma~\ref{splitbigdeg} before applying Lemma~\ref{twocases} in each iteration, we gain the additional property that the graph is very unbalanced, which we use when (2) in Lemma~\ref{twocases} holds.

We start with Lemma~\ref{splitbigdeg}, which finds a dense subgraph that is either very unbalanced  or has low maximum degree.

\begin{lemma}\label{splitbigdeg}
Every bipartite graph $G$ with $d(G)\geq k\geq 2$ contains a subgraph $H$ with vertex classes $A$ and $B$ such that $d(H)\geq k/4$, $d_H(v)\leq k$ for each $v\in A$, and either
\begin{enumerate}[label = (\arabic{enumi})]
\item $|A|\geq k^6|B|$, or
\item $\Delta(H)\leq k^7$.
\end{enumerate}
\end{lemma}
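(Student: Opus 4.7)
The plan is to reduce to minimum degree about $k/2$, truncate degrees on the larger side to exactly $k$, and then split the smaller side according to whether its high-degree vertices carry most of the remaining edges. First I would apply the standard density-extraction trick and repeatedly delete any vertex of current degree below $k/2$; this does not decrease the average degree. So I may assume at the outset that $G$ is bipartite with parts $X, Y$, that $\delta(G) \geq k/2$, and (by swapping) that $|X| \leq |Y|$. Next, for each $v \in Y$ with $d_G(v) > k$ I would discard all but $k$ arbitrarily chosen incident edges, producing a graph $H_1$ in which every $v \in Y$ satisfies $k/2 \leq d_{H_1}(v) \leq k$. Hence $(k/2)|Y| \leq e(H_1) \leq k|Y|$, and $|X|+|Y| \leq 2|Y|$ gives $d(H_1) \geq k/2$.

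Now I would split on the ratio of the parts. If $|Y| \geq k^6|X|$, take $(H, A, B) := (H_1, Y, X)$ and conclusion (1) is immediate. Otherwise $|X| \leq |Y| < k^6|X|$; set $X_H := \{v \in X : d_{H_1}(v) > k^7\}$ and $X_L := X \setminus X_H$. The bound $e(H_1) \leq k|Y|$ forces $k^7|X_H| \leq e(H_1) \leq k|Y|$, so $|X_H| \leq |Y|/k^6$. At least one of $e_{H_1}(X_H, Y)$ and $e_{H_1}(X_L, Y)$ is at least $e(H_1)/2$. In the first case I would take $H := H_1[X_H, Y]$, $A := Y$, $B := X_H$: then $|A| \geq k^6|B|$, $d_H(v) \leq k$ on $A$, and $d(H) \geq e(H_1)/(2|Y|) \geq k/4$, giving (1). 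In the second case I would take $H := H_1[X_L, Y]$, $A := Y$, $B := X_L$: every vertex of $H$ has degree at most $k^7$ (the $Y$-vertices by truncation, the $X_L$-vertices by definition), $d_H(v) \leq k$ on $A$, and again $d(H) \geq k/4$, giving (2).

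The only step requiring any care is recognising that truncating $Y$ to maximum degree exactly $k$ produces the a priori bound $e(H_1) \leq k|Y|$, which is precisely what is needed to conclude $|X_H| \leq |Y|/k^6$ and hence that the ``bad'' set is already small enough to yield the highly unbalanced alternative (1). Without the truncation one loses control of the relative sizes in (1) when many edges sit on high-degree $X$-vertices. Everything else is routine edge-counting, so I do not expect a genuine obstacle.
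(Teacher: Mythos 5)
Your proposal is correct and follows essentially the same strategy as the paper: reduce to minimum degree $k/2$, truncate degrees on the larger side to lie between $k/2$ and $k$, and then partition the smaller side by whether its vertices have degree above $k^7$, observing that the high-degree part is automatically small (giving (1)) and that restricting to the low-degree part bounds the maximum degree (giving (2)). The only cosmetic differences are that the paper keeps exactly $\lceil k/2\rceil$ edges at each large-side vertex rather than truncating to $k$, and it compares densities of the two induced subgraphs directly rather than edge counts, while your preliminary split on whether $|Y|\ge k^6|X|$ is harmless but redundant since the $X_H/X_L$ dichotomy already covers it.
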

\begin{proof}
Remove from $G$ one by one vertices of degree less than $k/2$. This does not decrease its average degree, and produces a subgraph $G_0\subset G$ with $d(G_0)\geq d(G)$ and $\delta(G_0)\geq k/2$. Suppose this (bipartite) subgraph has vertex classes $A$ and $B_0$ with $|A|\geq |B_0|$. For each $v\in A$, select $\lceil k/2\rceil$ incident edges and add them to $G_1$. Then, $e(G_1)\geq k|A|/2$ and  $d_{G_1}(v) \leq k$, for each $v\in A$.

Let $B_1\subset B_0$ be the set of vertices with degree at least $k^7$ in $G_1$, and let $B_2=B\setminus B_0$. Note that
\begin{eqnarray*}
d(G_1[B_1,A])+d(G_1[B_2,A])&=&\frac{2e(G_1[B_1,A])}{|B_1|+|A|}+\frac{2e(G_1[B_2,A])}{|B_2|+|A|}\\
&\geq& \frac{2e(G_1[B_1,A])+ 2e(G_1[B_2,A])}{|A|+|B_0|}
\geq \frac{2e(G_1)}{2|A|}\geq\frac{k}{2}.
\end{eqnarray*}
Therefore, either $d(G_1[B_1,A])\geq k/4$ or $d(G_1[B_2,A])\geq k/4$. If $d(G_1[B_2,A])\geq k/4$, then, letting $H=G_1[B_2,A]$, we have that $\Delta(H)\leq k^7$, and so (2) is satisfied.
On the other hand, if $d(G_1[B_1,A])\geq k/4$, then, letting $H=G_1[B_1,A]$, we have that $|B_1|k^7\leq \lceil k/2\rceil\cdot |A|$, so that (1) is satisfied.
\end{proof}

For graphs satisfying (2) in Lemma~\ref{splitbigdeg}, we show that with a small reduction in average degree we can reduce the maximum degree bound even further.

\begin{lemma}\label{maxdegreduced} 
Let $G$ be a bipartite graph with vertex classes $A$ and $B$ satisfying $\Delta(G)\leq k^7$, $d(G)\geq k/4$, and $d_G(v)\leq k$ for each $v\in A$. If $k$ is sufficiently large, then $G$ contains a subgraph with maximum degree at most $k$ and average degree at least $k/400\log k$.
\end{lemma}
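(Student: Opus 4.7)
The starting point is that the $A$-side degrees are already at most $k$; what we need is to cut every $B$-side degree (currently as large as $k^7$) down to at most $k$, while losing only an $O(\log k)$-factor in the average degree. My plan is to flatten the $B$-side degree sequence by a dyadic bucketing, and then to bring the resulting near-constant degrees into a safe range by a random sub-sampling of $A$.

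First, partition $B$ into the dyadic classes $B_j=\{v\in B:2^j\le d_G(v)<2^{j+1}\}$ for $j=0,\ldots,\lfloor 7\log_2 k\rfloor$. The assumptions $d(G)\ge k/4$ and $d_G(v)\le k$ for $v\in A$ already imply $|B|\le 7|A|$ and hence $e(G)\ge k|A|/8$. Since there are at most $O(\log k)$ non-empty classes $B_j$, pigeonholing on $e(G[A,B_j])$ gives some index $i$ and a subgraph $G_0:=G[A,B_i]$ with $d(G_0)=\Omega(k/\log k)$, in which every vertex of $B_i$ has degree in $[D,2D)$ for $D:=2^i\le k^7$, while the $A$-side degrees are still at most $k$. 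If $D\le k/2$ we are already done, since then $\Delta(G_0)\le k$, so assume $D>k/2$.

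Now thin down $A$ probabilistically: sample $A'\subseteq A$ by keeping each vertex independently with probability $p:=k/(8D)\le 1$, and let $G_1:=G[A',B_i]$. For each $v\in B_i$, the random variable $d_{G_1}(v)$ is binomial with mean $p\,d_G(v)\in[k/8,k/4]$, so a Chernoff bound gives $\mathbb{P}[d_{G_1}(v)>k]\le e^{-ck}$ for some absolute $c>0$. Form $H$ from $G_1$ by deleting every $v\in B_i$ whose degree exceeded $k$ (together with its edges). Then $\Delta(H)\le k$: the $A$-side degrees are bounded by $k$ inherited from $G$, and the surviving $B_i$-degrees are at most $k$ by choice.

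To obtain the average-degree conclusion I would use a first-moment argument: set $\alpha:=k/(400\log k)$ and show $\mathbb{E}\bigl[2e(H)-\alpha(|A'|+|B_i|)\bigr]>0$, which guarantees that some realisation satisfies $d(H)\ge\alpha$. The edge term dominates: $\mathbb{E}[e(G_1)]=p\,e(G_0)\ge pD|B_i|=k|B_i|/8$, while the edges removed when deleting the heavy $v\in B_i$ contribute at most $\Delta(G)\cdot|B_i|\cdot e^{-ck}\le k^{7}|B_i|e^{-ck}=o(|B_i|)$ in expectation, for $k$ large. For the vertex term, combining $e(G_0)\le 2D|B_i|$ with the pigeonhole lower bound $e(G_0)=\Omega(k|A|/\log k)$ yields $|A|=O(D|B_i|\log k/k)$, and hence $\mathbb{E}[|A'|+|B_i|]=p|A|+|B_i|=O(|B_i|\log k)$. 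The main delicacy is balancing these two error sources—the Chernoff tail behind the vertex deletions, and the $\log k$-factor in $|A|/|B_i|$—so that neither swamps the $\Omega(k|B_i|)$ expected edge count; the dyadic flattening is exactly what makes this feasible, since within a single class $B_i$ all degrees lie within a factor of two, so one global probability $p$ simultaneously drags every mean $B_i$-degree into the safe window $[k/8,k/4]$.
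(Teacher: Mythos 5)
Your proposal is correct and follows essentially the same route as the paper's proof: dyadic bucketing of the $B$-side degrees, pigeonholing onto a dense class $B_i$, and (when the class degrees exceed $k$) random sub-sampling of $A$ followed by deletion of the few $B_i$-vertices whose degree overshoots, with a Chernoff bound and a first-moment inequality $\mathbb{E}\bigl[e(H)-\alpha(|A'|+|B_i|)\bigr]\ge 0$ to extract the conclusion. The only thing left implicit is the final arithmetic showing the stated constant $400$ is attainable, but with your choices ($p=k/8D$, at most $O(\log k)$ buckets, $p|A|=O(|B_i|\log k)$) the edge term exceeds the vertex term with room to spare, so the argument goes through.
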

\begin{proof}
Note that we can assume that $G$ has no isolated vertices. Let $r=10\log k$, and, for each $i\in [r]$, let $B_i=\{v\in B:2^{i-1}\leq d_G(v)< 2^i\}$, noting that these sets partition $B$ as $2^r\geq k^7$. For each $i\in [r]$, let $G_i=G[A,B_i]$. As $e(G)\geq k|A|/8$, there must be some $i$ with $e(G_i)\geq k|A|/80\log k$.

Let $d=2^{i-1}$, so that, for each $v\in B$, $d\leq d_{G_i}(v) \leq 2d$. If $2d\leq k$, then $G_i$ satisfies the lemma, so assume that $2d>k$. Thus, $p:=k/4d<1/2$. Now, let $A'\subset A$ be chosen by including each vertex in $A$ independently at random with probability $p$. Let $B'$ be the set of vertices in $B_i$ with at most $k$ neighbours in $A'$ and let $H=G_i[B'\cup A']$. Then, by definition, $\Delta(H) \leq k$.

For each $v\in B_i$, its degree in $A'$ is binomially distributed with expectation $\E(d(v,A'))$ satisfying
$k/4=pd \leq \E(d(v,A')) \leq 2pd=k/2$. Therefore, using Chernoff's bound (see, e.g, Appendix A of \cite{AS}), we have
\[
\P(d(v,A')\geq k)\leq \P(d(v,A')\geq 2\cdot \E(d(v,A')))\leq 2\exp\left(-\E(d(v,A')/3\right)\leq 2\exp\left(-k/12\right).
\]
Thus, $\E(|B_i\setminus B'|)\leq 2\exp\left(-k/12\right)|B_i|$. Note that, as $d|B_i|\leq k|A|$, we have $|B_i|\leq 4p|A|$, and, therefore, $\E(|B_i\setminus B'|)\leq 8\exp\left(-k/12\right)p|A|$.

Let $k_0=k/800\log k$, so that $e(G_i)=e(B_i,A)\geq 10k_0|A|$. Note also that every vertex of $B_i\setminus B'$ has degree at most $\Delta(G) \leq k^7$ in $A'$. Hence
$e(G_i[B'\cup A']) \geq e(G_i[B_i,A'])- k^7|B_i\setminus B'|$. Since $\E\big(e(G_i[B_i\cup A'])\big)=p \cdot e(G_i[B_i,A])$ and $\E(|A'|)=p|A|$, we have
\begin{align*}
\E\big(e(G_i[B'\cup A'])-k_0(|A'|+|B'|)\big)&\geq \E\big(e(G_i[B_i,A'])- k^7|B_i\setminus B'|-k_0|A'|-k_0|B_i|\big)\\
&\geq  p\cdot e(G_i[B_i,A])-k^7\cdot 8\exp\left(-k/12\right)p|A|-k_0 \cdot p|A|-k_0\cdot 4p|A|\\
&\geq  p(10k_0|A|-k^{7}\cdot 8\exp\left(-k/12\right)|A|-5k_0|A|)\\
&=  p|A|(5k_0-k^{7}\cdot 8\exp\left(-k/12\right)) \geq 0.
\end{align*}

Thus, there is some choice of $A'$ for which $e(H)-k_0(|A'|+|B'|)\geq 0$. Then $d(H)\geq 2k_0=k/400\log k$ and $\Delta(H)\leq k$ (as explained above), completing the proof.
\end{proof}

Graphs produced by Lemma~\ref{maxdegreduced} have high enough average degree compared to their maximum degree that taking a random subgraph is likely, with a small alteration, to find a dense $C_4$-graph, as follows.

\begin{lemma}\label{randsublocal} Let $G$ be a graph with maximum degree $\Delta$ and average degree $d\geq \Delta^{3/4}$. Then, $G$ contains a $C_4$-free subgraph with average degree at least $d\cdot \Delta^{-3/4}/4$.
\end{lemma}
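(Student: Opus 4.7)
The plan is to use a standard one-shot random-deletion argument. I would let $H$ be the random subgraph of $G$ obtained by retaining each edge independently with probability $p := \Delta^{-3/4}$ (which lies in $[0,1]$ as $\Delta \geq 1$), then fix a realisation of $H$ for which $e(H)$ minus the number of surviving $C_4$'s is at least its expectation, and finally delete one edge from each surviving $C_4$ to obtain a $C_4$-free subgraph $H' \subseteq G$.

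The only quantities to control are $\E[e(H)]$ and the expected number of $C_4$'s in $H$. For the $C_4$ count in $G$, I would note that every edge $uv$ of $G$ lies in at most $\Delta^2$ copies of $C_4$: such a $C_4$ is determined by a pair $(a,b)$ with $a \in N(u)\setminus\{v\}$, $b \in N(v)\setminus\{u\}$, and $ab \in E(G)$, giving at most $\Delta \cdot \Delta$ choices. Summing over the edges and dividing by $4$ (since each $C_4$ contains four edges), $G$ has at most $e(G)\Delta^{2}/4$ copies of $C_4$. Therefore
$$
\E\bigl[e(H) - \#C_4(H)\bigr] \;\geq\; p\,e(G) \;-\; p^{4}\cdot\tfrac{1}{4}e(G)\Delta^{2} \;=\; \Delta^{-3/4} e(G)\bigl(1 - \tfrac{1}{4}\Delta^{-1/4}\bigr) \;\geq\; \tfrac{3}{4}\Delta^{-3/4} e(G),
$$
where the last inequality uses $\Delta^{-1/4}\leq 1$.

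Fixing such an $H$ and deleting one edge per surviving $C_4$ produces a $C_4$-free subgraph $H' \subseteq G$ with $e(H') \geq \tfrac{3}{4}\Delta^{-3/4}e(G) = \tfrac{3}{8}\, nd\,\Delta^{-3/4}$, writing $n := |V(G)|$ and using $e(G) = nd/2$. Since $|V(H')|\leq n$, its average degree satisfies $d(H') \geq 2e(H')/n \geq \tfrac{3}{4}d\,\Delta^{-3/4} \geq d\,\Delta^{-3/4}/4$, as required. The hypothesis $d\geq \Delta^{3/4}$ is not actually needed in the argument; it is stated only to ensure that the conclusion is non-trivial (average degree at least $1/4$).

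Main obstacle: essentially none. The only tuning point is calibrating $p$ so that the expected $C_4$ contribution is dominated by the expected edge count; the choice $p = \Delta^{-3/4}$ yields $p^{4}\Delta^{2} = \Delta^{-1}$, which is smaller than $p$ by a factor of $\Delta^{-1/4}\leq 1$, leaving a constant fraction of the edges after the alteration.
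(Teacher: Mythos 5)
Your proposal is correct and follows essentially the same argument as the paper: keep each edge independently with probability on the order of $\Delta^{-3/4}$, bound the expected number of surviving $C_4$'s using the maximum degree, and delete one edge per $C_4$ from a realisation achieving the expectation. The only (minor) refinements are your per-edge count of $C_4$'s (at most $e(G)\Delta^2/4$ rather than the paper's $n\Delta^3$), which lets you take $p=\Delta^{-3/4}$ and correctly observe that the hypothesis $d\geq\Delta^{3/4}$ is not needed, whereas the paper's additive error term $n/16$ does use it.
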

\begin{proof} Let $n=|G|$.
For every $v\in V(G)$, there are clearly at most $\Delta^3$ paths of length three starting at $v$ and hence at most $\Delta^3$ copies of $C_4$ containing $v$. Thus, $G$ contains at most $n\Delta^3$ copies of $C_4$.
Let $H$ be a random subgraph of $G$ formed by including each edge independently at random with probability $p=\Delta^{-3/4}/2$.
Let $X$ be the number of copies of $C_4$ in $H$. As $e(G)= n\cdot d(G)/2= nd/2$, we have
\[
\E\big(e(H)- X\big)\geq p\cdot e(G)-n \Delta^3 p^4 \geq nd \Delta^{-3/4}/4 -n/16 \geq nd \Delta^{-3/4}/8.
\]
Thus, there is some subgraph $H\subset G$ with $e(H)-X\geq nd \Delta^{-3/4}/8$. Removing an edge from each $C_4$ in $H$ gives a $C_4$-free subgraph with average degree at least $d \Delta^{-3/4}/4$.
\end{proof}

By combining Lemma~\ref{maxdegreduced} and Lemma~\ref{randsublocal}, we can now find a dense $C_4$-free subgraph in graphs satisfying (2) in Lemma~\ref{splitbigdeg}. For convenience, we record this as follows.

\begin{corollary}\label{corbigdiff} The following holds for sufficiently large $k$, and $t\leq k^{1/5}$.
Every bipartite graph $G$ with $d(G)\geq k$ contains either
\begin{enumerate}[label = (\arabic{enumi})]
\item a subgraph $H$ with vertex classes $A$ and $B$ such that $d(H)\geq k/4$, $d_H(v)\leq k$ for each $v\in A$, and $|A|\geq k^6|B|$, or
\item a $C_4$-free subgraph with average degree at least $t$.
\end{enumerate}
\end{corollary}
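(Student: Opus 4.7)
The plan is to chain together Lemma~\ref{splitbigdeg}, Lemma~\ref{maxdegreduced}, and Lemma~\ref{randsublocal} in the obvious way. First I would apply Lemma~\ref{splitbigdeg} to $G$, obtaining a bipartite subgraph $H$ with parts $A,B$ satisfying $d(H)\geq k/4$ and $d_H(v)\leq k$ for each $v\in A$, which falls into one of two sub-cases. If outcome (1) of Lemma~\ref{splitbigdeg} holds, then $|A|\geq k^6|B|$, and $H$ is precisely the subgraph required by outcome (1) of the corollary, so we are done.

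Otherwise outcome (2) of Lemma~\ref{splitbigdeg} holds, giving additionally $\Delta(H)\leq k^7$. Then $H$ satisfies the hypotheses of Lemma~\ref{maxdegreduced}, so for sufficiently large $k$ we obtain a subgraph $H'\subset H$ with $\Delta(H')\leq k$ and $d(H')\geq k/400\log k$. At this point we are in the almost-regular regime, and I would apply Lemma~\ref{randsublocal} with $\Delta=k$ and $d=k/400\log k$. The hypothesis $d\geq \Delta^{3/4}$ translates to $k/400\log k\geq k^{3/4}$, which holds for all sufficiently large $k$. The conclusion is a $C_4$-free subgraph with average degree at least
\[
\frac{d}{4\Delta^{3/4}}\;\geq\;\frac{k^{1/4}}{1600\log k}.
\]

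To close the argument I would check that this quantity is at least $t$. Since by assumption $t\leq k^{1/5}$, and $k^{1/4}/(1600\log k)\geq k^{1/5}$ holds for all sufficiently large $k$ (as $k^{1/20}$ dominates $1600\log k$), outcome (2) of the corollary is achieved. The whole proof is essentially bookkeeping, and the only potentially delicate point is verifying that the degree losses in Lemma~\ref{maxdegreduced} (a factor of roughly $\log k$) and in Lemma~\ref{randsublocal} (a factor of $k^{3/4}$) still leave us comfortably above the target $t\leq k^{1/5}$; this is exactly why the statement restricts $t$ to $k^{1/5}$ rather than a larger power of $k$.
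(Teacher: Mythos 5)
Your proposal is correct and follows essentially the same route as the paper: apply Lemma~\ref{splitbigdeg}, then in the low-max-degree case chain Lemma~\ref{maxdegreduced} and Lemma~\ref{randsublocal} with $\Delta=k$ to reach average degree $k^{1/4}/(1600\log k)\geq k^{1/5}\geq t$. You even explicitly verify the hypothesis $d\geq\Delta^{3/4}$ of Lemma~\ref{randsublocal}, which the paper leaves implicit.
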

\begin{proof}
By Lemma~\ref{splitbigdeg}, if (1) does not hold, then $G$ contains a subgraph $H$ which is bipartite with vertex classes $A$ and $B$ such that $d(H)\geq k/4$, $d_H(v)\leq k$ for every $v\in A$ and $\Delta(H)\leq k^7$. By Lemma~\ref{maxdegreduced}, $H$ contains a subgaph $H'$ with maximum degree at most $k$ and average degree at least $k/400\log k$.
Then, letting $\Delta=k$, for sufficiently large $k$, by Lemma~\ref{randsublocal} $H'$ contains a $C_4$-free subgraph with average degree at least $(k/400\log k)\cdot k^{-3/4}/4\geq k^{1/5}\geq t$, so that (2) holds.
\end{proof}

We now find dense $C_4$-free subgraphs in graphs which satisfy (2) in Lemma~\ref{twocases} and are very unbalanced.

\begin{lemma}\label{newdiff} Let $d\geq 2$.
Suppose a bipartite graph $G$  with vertex classes $A$ and $B$ satisfies
\begin{itemize}
\item $|A|\geq d^6|B|$,
\item $d(x,y) \leq d$, for all distinct $x,y\in B$, and
\item  $d_G(v)\leq d$, for every $v\in A$.
\end{itemize}
Then, $G$ contains a subgraph $G'$ which is $C_4$-free and has average degree at least $d(G)/5$.
\end{lemma}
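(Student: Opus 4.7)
The plan is to apply a random sparsification on the large side $A$ together with an alteration step, exploiting the heavy imbalance $|A|\ge d^{6}|B|$ to ensure that the average-degree penalty contributed by $B$ is negligible.

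First, I would bound the number of $4$-cycles in $G$, using both structural hypotheses. A $C_4$ is specified by a pair $\{b_1,b_2\}\subset B$ together with two of their $d_G(b_1,b_2)$ common $A$-neighbours, so $\#C_4(G)=\sum_{\{b_1,b_2\}}\binom{d_G(b_1,b_2)}{2}$. Applying the codegree bound $d_G(b_1,b_2)\le d$, then rewriting $\sum_{\{b_1,b_2\}} d_G(b_1,b_2)=\sum_{a\in A}\binom{d(a)}{2}$ and using $d(a)\le d$, I would obtain
\[
\#C_4(G)\;\le\;\frac{d}{2}\sum_{a\in A}\binom{d(a)}{2}\;\le\;\frac{d^{2}}{4}\,e(G).
\]

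Next, let $D:=d(G)$ and sample $A'\subseteq A$ by keeping each $a\in A$ independently with probability $p:=2/d^{2}$; consider $G':=G[A'\cup B]$. Linearity gives $\E[e(G')]=p\,e(G)$, $\E[\#C_4(G')]=p^{2}\#C_4(G)\le p\,e(G)/2$, and $\E[|A'|]=p|A|$. Combining with $e(G)\ge D|A|/2$ (which follows from $D=2e(G)/(|A|+|B|)$) yields
\[
\E\!\left[e(G')-\#C_4(G')-\tfrac{D}{10}(|A'|+|B|)\right]\;\ge\;\tfrac{p\,e(G)}{2}-\tfrac{D}{10}(p|A|+|B|)\;\ge\;\tfrac{3D|A|}{10\,d^{2}}-\tfrac{D|B|}{10},
\]
which is strictly positive by $|A|\ge d^{6}|B|$ (any $|A|>d^{2}|B|/3$ would suffice). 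Hence some outcome of $A'$ satisfies $e(G')-\#C_4(G')\ge (D/10)(|A'|+|B|)$; deleting one edge from each $C_4$ in $G'$ produces a $C_4$-free subgraph $G''$ on at most $|A'|+|B|$ vertices with $e(G'')\ge (D/10)(|A'|+|B|)$, giving $d(G'')\ge D/5=d(G)/5$.

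The only real design question is the choice of $p$: forcing $p^{2}\#C_4(G)$ to be comparable to $p\,e(G)$ pins down $p=\Theta(d^{-2})$, and then requiring $p|A|\gg|B|$ so that the $|B|$ term in the denominator of the average degree is negligible is exactly where the imbalance $|A|\ge d^{6}|B|$ is used. I do not foresee a genuine obstacle beyond keeping track of these constants; the two hypotheses are each used exactly once (the codegree bound to turn $\binom{d_G(b_1,b_2)}{2}$ into a linear factor, and the max-degree bound on $A$ to turn $\binom{d(a)}{2}$ into a linear factor), and the imbalance supplies the slack needed to absorb the $|B|$ term.
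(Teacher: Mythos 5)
Your proposal is correct and takes essentially the same route as the paper's proof: randomly retain each vertex of $A$ independently (the paper samples with $p=1/d^6$ rather than your $p=2/d^2$, but both choices work), bound the number of $4$-cycles using the codegree bound on $B$ and the degree bound on $A$, and then delete one edge per surviving $C_4$, with the imbalance $|A|\ge d^6|B|$ making the contribution of $B$ to the vertex count negligible. The only differences are cosmetic (your $C_4$ count via $\sum\binom{d_G(b_1,b_2)}{2}$ versus the paper's per-vertex count, and the constant in $p$), so there is nothing to fix.
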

\begin{proof} Let $\ell=d(G)$ and let $A'\subset A$ be a subset of vertices chosen independently at random with probability $p=1/d^6$ and let $G'=G[A'\cup B]$.
Each vertex $v\in B$ has at most $d\cdot d(v)$ paths of length two starting at it. Since $d(v,u) \leq d$ for all $u \in N^2(v)$, $v$ is contained in at most $d^2\cdot d(v)$ copies of $C_4$. Thus, there are at most $\sum_{v\in B}d^2\cdot d(v)\leq d^2\cdot e(G)$ copies of $C_4$ in $G$. Let $X$ be the number of copies of $C_4$ in $G'$, and note that $\E(X)\leq p^2d^2e(G)\leq p e(G)/2$. Also note that $|B|\leq p|A|$, $\E|A'|=p|A|$ and $\ell|A|\leq 2e(G)$. Hence
\[
\E\big(e(G')-X-(|A'|+|B|)\ell/10\big)\geq p e(G)-p e(G)/2-2p|A|\cdot \ell/10= p\big(e(G)/2-\ell |A|/5\big)\geq 0.
\]
Thus, there is some subgraph $G'\subset G$ with $e(G')-X-(|A'|+|B|)\ell/10\geq 0$. Taking $G'$ and removing one edge from each $C_4$ gives a $C_4$-free graph with average degree at least $\ell/5$.
\end{proof}

In our proof, at every iteration (if required), we wish to apply Corollary~\ref{corbigdiff}, then Lemma~\ref{twocases}, then Lemma~\ref{newdiff}. For convenience we combine these steps in the following corollary. 
The improvement we have made can be seen by comparing this to Corollary~\ref{cortwocasesnew}.

\begin{corollary}\label{cortwocases} For sufficiently large $t$, let $G$ be a bipartite graph with $d(G)\geq d\geq t^5$. Then, $G$ contains either
  \begin{enumerate}[label = (\arabic{enumi})]
    \item a $C_4$-free subgraph with average degree at least $t$, or,
    \item a vertex $v$ and sets $A\subset N_G(v)$ and $B\subset V(G)\setminus (A\cup \{v\})$ with $d(G[A,B])\geq d/50t$.
\end{enumerate}
\end{corollary}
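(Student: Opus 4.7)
The plan is to chain the three tools advertised in the paragraph preceding the corollary: first Corollary~\ref{corbigdiff}, then Lemma~\ref{twocases}, then Lemma~\ref{newdiff}. I first apply Corollary~\ref{corbigdiff} with $k=d$, which is permissible because $t\le d^{1/5}$ follows from $d\ge t^5$. If its option~(2) fires, conclusion~(1) of the present corollary is immediate, so assume option~(1): we obtain a bipartite subgraph $H\subset G$ with parts $A$ and $B$ satisfying $|A|\ge d^6|B|$, $d(H)\ge d/4$, and $d_H(v)\le d$ for every $v\in A$.

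Next I apply Lemma~\ref{twocases} to $H$ with $\lambda=d/50t$, labelling the parts so that the small part of $H$ plays the role of ``$B$'' in that lemma, i.e., so that both the vertex $v$ and the codegree bound the lemma delivers land on the small side. If case~(1) of Lemma~\ref{twocases} triggers, then the produced triple $(v,A',B')$ has $A'\subset N_H(v)\subset N_G(v)$, $B'$ disjoint from $A'\cup\{v\}$, and $d(G[A',B'])\ge d(H[A',B'])\ge\lambda=d/50t$, which is exactly conclusion~(2) of the present corollary.

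Otherwise case~(2) of Lemma~\ref{twocases} yields a spanning subgraph $H'\subset H$ with $d(H')\ge d(H)/(\lambda+1)$ and codegrees in $B$ bounded by $\lambda\le d$. Because $H'$ is spanning in $H$, the unbalance $|A|\ge d^6|B|$ and the bound $d_{H'}(v)\le d_H(v)\le d$ on $A$ are inherited, so the hypotheses of Lemma~\ref{newdiff} hold for $H'$ with parameter $d$. Lemma~\ref{newdiff} then produces a $C_4$-free subgraph of average degree at least
\[
d(H')/5\ge d(H)/\bigl(5(\lambda+1)\bigr)\ge (d/4)/(d/5t)=5t/4\ge t,
\]
where I used $\lambda+1\le 2\lambda$ (valid since $\lambda\ge 1$, i.e., $d\ge 50t$, which follows from $d\ge t^5$ for $t$ large) to bound $5(\lambda+1)\le d/5t$. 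This gives conclusion~(1).

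The only real subtlety is the bookkeeping: orienting Lemma~\ref{twocases} so that its codegree conclusion lands on the \emph{small} side of the unbalanced bipartite pair, since this is exactly what Lemma~\ref{newdiff} demands of its ``$B$'' class; everything else is straightforward substitution and the comparison $d\ge t^5$.
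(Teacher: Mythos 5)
Your proposal is correct and follows essentially the same route as the paper: apply Corollary~\ref{corbigdiff} with $k=d$, then Lemma~\ref{twocases} with $\lambda=d/50t$ oriented so the codegree bound falls on the small class, and finish case (2) with Lemma~\ref{newdiff}, with only trivially different arithmetic (the paper notes $d(H')\geq 6t$ where you get $5t/4$ after dividing by $5$; both exceed $t$). The orientation point you flag is indeed the only subtlety, and the paper handles it the same way.
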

\begin{proof} By Corollary~\ref{corbigdiff}, if the required $C_4$-free subgraph does not exist, then $G$ contains a subgraph $H$ with vertex classes $A$ and $B$ such that $d(H)\geq d/4$, $d_H(v)\leq d$ for each $v\in A$, and $|A|\geq d^6|B|$. Apply Lemma~\ref{twocases} to $H$ with $\lambda=d/50t$.
If case (1) in Lemma~\ref{twocases} holds, then (2) holds here. Therefore, assume that there is some spanning subgraph $H'\subset H$ with $d(H')\geq d(H)/(\lambda+1)\geq 6t$ and, $d_{H'}(x,y) \leq \lambda \leq d$
for all $x,y\in B$. By Lemma~\ref{newdiff}, $H'$ contains a $C_4$-free subgraph with average degree at least $t$, as required.
\end{proof}

\begin{proof}[Proof of Theorem~\ref{shortC4free}] Suppose $G$ is bipartite, with average degree at least $(50t)^{9t^2}$ and suppose, for contradiction, that $G$ contains no $C_4$-free subgraph with average degree at least $t$.
For each $i=1,\ldots,4t^2$, by Corollary~\ref{cortwocases}, we can find a vertex $v_i$ and sets $A_i\subset N(v_i)$ and $B_i\subset B_{i-1}\setminus \{v_i\}$ or $B_i\subset A_{i-1}\setminus \{v_i\}$ and a graph $G_i=G_{i-1}[A_i,B_i]$ with average degree at least $(50t)^{9t^2-i}$. Moreover, note that, for each $j<i$, $A_i\subset N(v_j)$ or $B_i\subset N(v_j)$.

Let $s=4t^2$. Thus we have vertices $v_1,\ldots,v_{2s}$ and a graph $G_{2s}$ with average degree at least $(50t)^{t^2}\geq s$ and vertex sets $A_{2s}$ and $B_{2s}$ such that, for each $i\in [s]$, either $A_{2s}\subset N(v_i)$ or $B_{2s}\subset N(v_i)$.  Relabelling, we can assume that we have vertices $v_1,\ldots,v_s$ with $A_s\subset N(v_i)$ for each $i\in [s]$. As $d(G_{2s})\geq s$, $|A_{2s}|\geq s$, and therefore $G_{2s}$, and hence $G$, contains a copy of $K_{s,s}$. Hence, by Proposition~\ref{bipartitecomp}, $G$ contains a $C_4$-free subgraph with average degree at least $t$, a contradiction.
\end{proof}

\section{Lower bound}
In this section we prove Theorem \ref{exampleC4free}. Recall, from the introduction, the following bound of Erd\H{o}s, R\'enyi, and Sos~\cite{ERS} on the Tur\'an number of $C_4$.  It says that every $n$-vertex $C_4$-free graph $G$ has $e(G)\leq n^{3/2}/2+n/4$.
In particular, then, the complete bipartite  graph $K_{n,n}$ has average degree $n$ but no $C_4$-free subgraphs with average degree $\Omega(n^{1/2})$. As noted already in the beginning of the paper, such regular graphs are unlikely to provide good lower bounds. Instead, we base our construction on (highly irregular) graphs without regular subgraphs, constructed by Pyber, R\"odl and Szemer\'edi~\cite{PRS}, as follows.
\begin{theorem}\label{Theorem_PRS_construction}
For all $d$, there exists a graph with $d(G)\geq d$ which contains no $3$-regular subgraph.
\end{theorem}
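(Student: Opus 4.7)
The plan is to construct the required graphs recursively, since no simple one-shot construction seems to suffice: long cycles give average degree at most $2$, and stars $K_{1,n}$ or bouquets of triangles glued at a common vertex give only bounded average degree, although they all avoid $3$-regular subgraphs trivially. For the base case I would take $G_0$ to be a long cycle: it has average degree approaching $2$ and no $3$-regular subgraph, since every subgraph has maximum degree at most $2$. For the inductive step, given $G_k$ with average degree at least $k$ and no $3$-regular subgraph, I would construct $G_{k+1}$ by combining many vertex-disjoint copies of $G_k$ with a small collection of high-degree \emph{hub} vertices joined to the copies via carefully chosen gadgets. Tuning the number of copies and the number of hub--copy edges lets the average degree gain at least $1$ per level while the total size grows in a controlled way.

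The gadgets must be designed so that in any putative $3$-regular subgraph $H$ of $G_{k+1}$, one of two things happens: either $H$ restricts to a $3$-regular subgraph inside some copy of $G_k$, contradicting the inductive hypothesis; or $H$ forces some hub vertex to have the ``wrong'' degree. A natural device is a gadget with two designated attachment vertices joined through the relevant copy of $G_k$ by at least two internally disjoint paths, so that any $3$-regular subgraph must either use both attachment vertices at the hub or neither. That gives an even contribution to the hub's degree, producing a parity conflict with the odd target $3$.

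The main obstacle will be designing the gadgets so that they simultaneously preserve the no-$3$-regular-subgraph property under amplification, produce a net gain in average degree at each step, and correctly transmit the inductive hypothesis. Pyber, R\"odl and Szemer\'edi~\cite{PRS} carry out precisely such a recursive amplification; their implementation is delicate, but the overall skeleton is as described above.
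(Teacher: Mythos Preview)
The paper does not prove this theorem at all: it is quoted as a black-box result of Pyber, R\"odl and Szemer\'edi~\cite{PRS} and used without proof in the lower-bound construction. So there is no ``paper's own proof'' to compare against; the paper simply cites the result.

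Your proposal is not a proof but a heuristic sketch of an approach, and you acknowledge as much in the final sentence by deferring to~\cite{PRS} for the actual implementation. The recursive-amplification-with-parity-obstruction picture you describe is in the right spirit for this kind of construction, but the crucial content---the design of the gadgets so that a putative $3$-regular subgraph either descends to a copy of $G_k$ or forces a parity violation at a hub, \emph{while} the average degree strictly increases---is exactly the part you leave unspecified. As written, nothing in your outline rules out the possibility that every gadget satisfying your parity constraint costs more in new low-degree vertices than it gains in edges, so the claimed ``average degree gain at least $1$ per level'' is unsupported. If the intent is simply to cite the result, then your last sentence alone suffices and the preceding discussion is superfluous; if the intent is to give an independent proof, the gadget design and the average-degree bookkeeping need to be made explicit.
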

To proved our lower bound, we take (essentially) the Pyber--R\"odl--Szemer\'edi graph $H$ with parameter $4d$ and blow up every vertex into a set of $d^2$ vertices, replacing each edge by a copy of $K_{d^2, d^2}$. Intuitively, this graph should not have a $C_4$-free subgraph $H'$ with average degree much bigger than $d$. Indeed, firstly, the above estimate for the Tur\'an number of $C_4$ prevents the edges in $H'$ between any two blown-up vertices from having $C_4$-free subgraphs with average degree $\geq d$. 
Moreover, the subgraph of $H$  whose edges corresponds to pairs of blown-up vertices with large average degree between them in $H'$, can be shown to have maximum degree bounded by $d^4$. Then, from  
the properties of the Pyber--R\"odl--Szemer\'edi graph, it follows that such a graph has few edges. To prove this rigorously, we first modify the Pyber--R\"odl--Szemer\'edi graph slightly, using another result from \cite{PRS}.
\begin{theorem}\label{Theorem_PRS_regular_subgraph}
There is a constant $c$ such that the following holds.
Let $G$ be a graph with $\Delta(G)=\Delta$ and $d(G)\geq c\log\Delta$. Then $G$ has a $3$-regular subgraph.
\end{theorem}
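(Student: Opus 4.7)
The plan is to prove Theorem~\ref{Theorem_PRS_regular_subgraph} in three stages: reduce to a bipartite subgraph, refine to one whose degrees on each side are confined to a constant-factor range, and finally extract a $3$-regular subgraph from this near-regular graph.

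First I would take a uniformly random bipartition $V(G)=V_1\cup V_2$. The expected number of crossing edges is $e(G)/2$, so some bipartition fixes a bipartite subgraph $G_1$ with $d(G_1)\geq d(G)/2 \geq (c/2)\log\Delta$ and $\Delta(G_1)\leq\Delta$. Next, apply a dyadic bucketing: for integers $0\leq i,j\leq \lceil\log_2\Delta\rceil$, let $V_1^i=\{v\in V_1: 2^i\leq d_{G_1}(v)<2^{i+1}\}$ and define $V_2^j\subset V_2$ analogously. Since these $O(\log^2\Delta)$ pairs $(V_1^i,V_2^j)$ partition $E(G_1)$, some bipartite subgraph $G_2:=G_1[V_1^i,V_2^j]$ captures an $\Omega(1/\log^2\Delta)$-fraction of the edges. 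Taking $c$ sufficiently large forces $d(G_2)$ to exceed any prescribed constant. Iteratively deleting any vertex whose current degree has dropped below a quarter of its initial value then yields a subgraph $G_3\subset G_2$ with parts $A',B'$ in which every vertex on each side has degree within a constant factor of that side's average.

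For the final step, $G_3$ is an almost-regular bipartite graph with large minimum degree. A factor/flow argument --- or equivalently, first extracting an exactly $k$-regular bipartite subgraph for some large $k\geq 3$ (using a Tutte-type $f$-factor or max-flow theorem, losing only a constant factor in degree absorbable into $c$) and then applying K\"onig's edge-colouring theorem to peel off any $3$ edge-disjoint perfect matchings --- produces a $3$-regular spanning subgraph, completing the proof.

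The hardest step is the last: turning near-regularity into exact $3$-regularity is delicate because at low scales (say minimum degree $O(\log\log\Delta)$) there exist graphs with no $3$-regular subgraph whatsoever, as Theorem~\ref{Theorem_PRS_construction} illustrates. The hypothesis $d(G)\geq c\log\Delta$ is precisely what survives the $\Theta(\log^2\Delta)$ loss incurred by dyadic bucketing (and by the pruning step), leaving minimum degree much larger than $3$ on both sides of $G_3$, so that the factor/matching extraction succeeds comfortably; without this quantitative slack the Pyber--R\"odl--Szemer\'edi construction would be an obstruction.
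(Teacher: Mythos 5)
First, note that the paper does not prove this statement at all: Theorem~\ref{Theorem_PRS_regular_subgraph} is imported as a black box from Pyber, R\"odl and Szemer\'edi \cite{PRS}, so you are attempting to reprove a genuinely hard external result, and your sketch does not succeed. The decisive flaw is quantitative. After the bipartition you have $d(G_1)\geq (c/2)\log\Delta$, and your dyadic bucketing over \emph{both} sides splits the edges into $\Theta(\log^2\Delta)$ classes; the best class $G_2$ therefore only satisfies $d(G_2)\geq d(G_1)/\Theta(\log^2\Delta)=\Omega\bigl(c/\log\Delta\bigr)$, which tends to $0$ as $\Delta\to\infty$ for any fixed constant $c$. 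Your closing remark that the hypothesis $d(G)\geq c\log\Delta$ ``survives the $\Theta(\log^2\Delta)$ loss'' is an arithmetic error: a $\log^2\Delta$ loss would require a hypothesis of order $\log^2\Delta$, i.e.\ it proves (at best) a strictly weaker theorem than the one stated. This is not a cosmetic issue --- the whole content of the PRS theorem is that a single $\log\Delta$ suffices, and any argument that first forces near-regularity on both sides by bucketing pays more than that budget.

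The final step is also a genuine gap rather than a routine detail. Even granting a bipartite graph whose degrees on each side lie within a constant factor of a large average, extracting an \emph{exactly} $3$-regular (or exactly $k$-regular) subgraph is not a consequence of a max-flow/$f$-factor argument plus K\"onig: $f$-factor and flow theorems give subgraphs meeting degree \emph{bounds} or exact degrees on a prescribed spanning vertex set (which near-regularity does not provide, since the sides need not balance), and K\"onig's decomposition into perfect matchings already presupposes a regular bipartite graph. Exact regularity is precisely the delicate point here --- the known proofs of Pyber's and the PRS positive results go through the algebraic theorem of Alon, Friedland and Kalai on regular subgraphs of almost regular graphs (e.g.\ average degree $>4$ and maximum degree $\leq 5$ gives a $3$-regular subgraph), together with a careful reduction into that narrow degree window; naive random sparsification to mean degree about $4$ leaves a constant fraction of vertices violating the maximum-degree condition, so even from your $G_3$ the extraction needs a real argument. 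As it stands, both the middle and the final stages of your plan fail, so the proposal does not constitute a proof of Theorem~\ref{Theorem_PRS_regular_subgraph}.
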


\begin{corollary}\label{Corollary_irregular_graph}
There is a constant $c$ so that for all $d\geq 1$ there exists a balanced bipartite graph $H$ with bipartition classes $A, B$  having $d(a)=d$ for all $a\in A$, and 
 which contains no subgraph $\Gamma$ with $d(\Gamma)\geq c\log d$ and $\Delta(\Gamma)\leq d^{4}$.
\end{corollary}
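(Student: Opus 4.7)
The plan is to produce $H$ as a sparsifying modification of the graph furnished by Theorem~\ref{Theorem_PRS_construction}, using Theorem~\ref{Theorem_PRS_regular_subgraph} in the last step to rule out any $\Gamma$ whose maximum degree is polynomial in $d$.

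First, I apply Theorem~\ref{Theorem_PRS_construction} with parameter $4d$ to obtain a graph $G_0$ with $d(G_0)\geq 4d$ and no $3$-regular subgraph. Passing to a bipartite subgraph $G_1$ of $G_0$ with at least half of the edges gives $d(G_1)\geq 2d$, and $G_1$ inherits the property of containing no $3$-regular subgraph. I then iteratively delete vertices of degree less than $d$; since $d\leq d(G_1)/2$ a short calculation shows such a deletion never decreases the average degree, so the process terminates with a non-empty bipartite graph $G_2$, having parts $X, Y$ and $\delta(G_2)\geq d$.

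After relabelling so that $|X|\geq |Y|$, for each $x\in X$ I arbitrarily discard $d_{G_2}(x)-d$ of its incident edges, obtaining a bipartite graph $H_0$ in which every vertex of $X$ has degree exactly $d$. Finally, I set $A=X$ and let $B$ consist of $Y$ together with $|X|-|Y|$ new isolated vertices. The resulting $H$ is balanced bipartite with $d_H(a)=d$ for all $a\in A$, and, since every edge of $H$ is an edge of $G_0$, it contains no $3$-regular subgraph.

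To verify the main property, let $c_0$ be the constant from Theorem~\ref{Theorem_PRS_regular_subgraph} and set $c=5c_0$ (increased by an additive constant to handle small $d$, for which the corollary is trivial). Suppose for contradiction that $H$ contains a subgraph $\Gamma$ with $d(\Gamma)\geq c\log d$ and $\Delta(\Gamma)\leq d^{4}$. Then $d(\Gamma)\geq c\log d\geq c_0\log(d^{4})\geq c_0\log \Delta(\Gamma)$, so by Theorem~\ref{Theorem_PRS_regular_subgraph} the graph $\Gamma$ contains a $3$-regular subgraph, contradicting that $H\supseteq \Gamma$ has none. No step presents a real obstacle; the only subtle point is taking $c$ at least $4c_0$, so that $c\log d$ dominates $4c_0\log d=c_0\log(d^{4})$ — precisely the factor that converts the ``no $3$-regular subgraph'' guarantee of Theorem~\ref{Theorem_PRS_construction} into the required exclusion of bounded-maximum-degree, high-average-degree subgraphs.
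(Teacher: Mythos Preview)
Your proof is correct and follows essentially the same approach as the paper's: apply Theorem~\ref{Theorem_PRS_construction} to obtain a graph with no $3$-regular subgraph, invoke Theorem~\ref{Theorem_PRS_regular_subgraph} to rule out any subgraph $\Gamma$ with $d(\Gamma)\geq c\log d$ and $\Delta(\Gamma)\leq d^4$, and then sparsify down to a balanced bipartite graph with degree exactly $d$ on one side. The only cosmetic difference is that the paper balances the parts by deleting vertices from the larger class rather than padding the smaller class with isolated vertices.
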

\begin{proof}
Let $c$ be such that Theorem~\ref{Theorem_PRS_regular_subgraph} holds with the constant $c/4$.
Using Theorem~\ref{Theorem_PRS_construction}, let $G$ be a graph with $d(G)\geq 4d$ and no 3-regular subgraph. By Theorem~\ref{Theorem_PRS_regular_subgraph}, $G$ then must have no subgraph $\Gamma$ with $d(\Gamma)\geq (c\log \Delta(\Gamma))/4$. Thus, $G$ has no subgraph $\Gamma$ with $d(\Gamma)\geq c\log d$ and $\Delta(\Gamma)\leq d^{4}$.

Now, to deduce the corollary, we need only show that $G$ has a balanced bipartite subgraph with all vertices in one class having degree $d$. To see this, by iteratively deleting vertices with degree at most $2d$ and using that $d(G)\geq 4d$, first find a subgraph $G_1$ of $G$ with $\delta(G_1)\geq 2d$.
Say $G_1$ has bipartition classes $A$ and $B$, with $|A|\geq |B|$. For each vertex $a\in A$, delete all but $d$ edges adjacent to $a$. Then, delete $|A|-|B|$ vertices from $A$. Note that the final graph, $H$ say, is bipartite, balanced, and every vertex in the class from $A$ has degree $d$.
\end{proof}

As we already explained, our example will be a blow up of the graph from the previous corollary. The following lemma will be used to show that certain $C_4$-free subgraphs of this example have to be sparse. 
The sets $A_i$ and $B_j$ here represent the blown-up vertices.

\begin{lemma}\label{Lemma_partite_graph_degenerate}
Let $k,d\geq 4$.
Let $G$ be a $C_4$-free bipartite graph whose vertex classes are $A$ and $B$. Let $A$ have partition $A_1\cup\ldots \cup A_r$ and $B$ have partition $B_1\cup \ldots \cup B_s$, with $|A_i|, |B_j|\leq d^2$ for all $i\in [r]$ and $j\in [s]$. Define a graph $H$ with the vertex set $\{a_1, \dots, a_r, b_1, \dots, b_s\}$ and $a_ib_j$ an edge exactly if $e(A_i, B_j)>0$. Suppose $H$ is $k$-degenerate. 
Then, $d(G)\leq 13kd$.
\end{lemma}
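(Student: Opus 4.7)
The plan is to assign each edge of $H$ to one of its endpoints, bound the corresponding $G$-edges via Kővári--Sós--Turán (KST) at each vertex of $H$, and control the resulting sum using the $k$-degeneracy of $H$. The main obstacle is the aggregation: applying KST to each edge of $H$ separately is too wasteful (a blown-up star shows all edges may cluster at a single vertex of $H$), while applying KST per vertex of $H$ naively would double-count edges. The fix is to partition the edges of $G$ by assigning each edge of $H$ to its larger-weight endpoint.

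For each $v \in V(H)$, write $S_v$ for the corresponding part ($A_i$ or $B_j$) and $s_v := |S_v|$, so $s_v \leq d^2$. Since $G$ is $C_4$-free, KST gives, for each $v$ and any set $T$ in the bipartition class of $G$ opposite to $S_v$, $e_G(S_v, T) \leq s_v + |T|\sqrt{s_v} \leq s_v + d|T|$. Now assign each edge $uv \in E(H)$ to whichever of $u, v$ has the larger weight (breaking ties arbitrarily), and for each $v \in V(H)$ set $T_v = \bigcup\{S_u : u \sim_H v,\ uv\ \text{assigned to } v\}$. Since $H$ is bipartite between $\{a_i\}$ and $\{b_j\}$, $T_v$ lies in the bipartition class of $G$ opposite to $S_v$, so the $G$-edges assigned to $v$ form a $C_4$-free bipartite subgraph between $S_v$ and $T_v$, of size at most $s_v + d|T_v|$. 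Summing over $v$, and since each edge of $G$ is assigned exactly once,
\[
e(G) \leq \sum_{v \in V(H)} \bigl(s_v + d|T_v|\bigr) = |V(G)| + d\sum_{uv \in E(H)} \min(s_u, s_v).
\]

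It remains to show $\sum_{uv \in E(H)} \min(s_u, s_v) \leq k|V(G)|$. Fix a degeneracy ordering of $H$ and, for each edge $uv$, let $w$ be its later endpoint. Then $\min(s_u, s_v) \leq s_w$, and each $w$ is the later endpoint of at most $k$ edges (those joining $w$ to its $\leq k$ earlier neighbors). Hence $\sum_{uv} \min(s_u, s_v) \leq k\sum_w s_w = k|V(G)|$, so $e(G) \leq (1+kd)|V(G)|$, giving $d(G) \leq 2(1+kd) \leq 13kd$ using $k, d \geq 4$.
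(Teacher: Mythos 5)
Your proof is correct, and it takes a genuinely different route from the paper. The paper splits the edges of $G$ into three types according to whether an endpoint has at least $d$ neighbours in the opposite part: the two ``locally dense'' types are handled by a separate lemma (an Erd\H{o}s--R\'enyi--S\'os-type count showing such subgraphs have average degree $O(d)$), while the ``locally sparse'' type-3 subgraph is shown to be $kd$-degenerate by locating, in any subgraph, a part whose reduced vertex has degree at most $k$ in the corresponding subgraph of $H$. Your argument instead charges each edge of $H$ to its heavier endpoint, applies the K\H{o}v\'ari--S\'os--Tur\'an bound once per reduced vertex $v$ to the bipartite graph between $S_v$ and the union $T_v$ of the parts charged to it (the stated form $e_G(S_v,T)\leq s_v+|T|\sqrt{s_v}\leq s_v+d|T|$ is indeed the standard asymmetric KST/Reiman bound, valid here since every pair of vertices of $T$ has at most one common neighbour in $S_v$), and then uses a degeneracy ordering of $H$ to bound $\sum_{uv\in E(H)}\min(s_u,s_v)\leq k|V(G)|$; the disjointness of the parts and the fact that each $H$-edge is charged exactly once make the double counting clean. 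What each approach buys: the paper's decomposition yields a slightly stronger structural fact (the locally sparse part is genuinely $kd$-degenerate, not just of bounded average degree) and isolates a reusable lemma that is also invoked elsewhere in Section 4, while your charging argument is shorter, avoids the auxiliary lemma entirely, and gives the sharper bound $d(G)\leq 2+2kd$, comfortably inside $13kd$.
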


To show this, we use the following lemma.
\begin{lemma}\label{Lemma_partite_graph_high_degree}
Let $d\geq 3$.
Let $G$ be a $C_4$-free bipartite graph whose vertex classes are $A$ and $B$. Suppose $A$ has a partition $A_1\cup \dots\cup A_m$ with $|A_i|\leq d^2$ for each $i\in [m]$. Suppose that for all $b\in B$ and $i\in [m]$, we
 have $|N(b)\cap A_i|=0$ or $|N(b)\cap A_i|\geq d$. Then $d(G)\leq 18d$.
\end{lemma}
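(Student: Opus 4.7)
The plan is to estimate, for each part $A_i$, the number of edges in the induced bipartite subgraph $G[A_i,B_i]$, where $B_i:=\{b\in B: N(b)\cap A_i\neq\emptyset\}$ is the set of active neighbours of $A_i$ in $B$. Since every edge of $G$ lies in exactly one such piece (each $b\in B$ records its ``home'' parts by which $A_i$ it touches), summing will give $e(G)=\sum_i e(G[A_i,B_i])$, and I will bound this by a constant multiple of $d\cdot|A|$, which immediately yields $d(G)\leq O(d)$.

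First, observe that if $a_i:=|A_i|<d$ then the hypothesis forces $B_i=\emptyset$ (every $b\in B_i$ would need at least $d$ neighbours in $A_i$), so we may restrict to parts with $d\leq a_i\leq d^2$. The main ingredient is to exploit $C_4$-freeness inside $G[A_i,B_i]$ in two ways. Counting cherries with apex in $B_i$, each pair in $A_i$ has at most one common neighbour, so
\[
b_i\binom{d}{2}\leq \sum_{b\in B_i}\binom{d(b,A_i)}{2}\leq \binom{a_i}{2},
\]
giving $b_i:=|B_i|\leq a_i(a_i-1)/(d(d-1))$. A symmetric cherry count with apex in $A_i$, combined with Cauchy--Schwarz (essentially the K\H{o}v\'{a}ri--S\'{o}s--Tur\'{a}n argument applied to $G[A_i,B_i]$), yields
\[
e_i:=e(G[A_i,B_i])\leq a_i+b_i\sqrt{a_i}.
\]
The crucial observation is that $a_i\leq d^2$ gives $\sqrt{a_i}\leq d$, which converts this into the cleanly linear estimate $e_i\leq a_i+d\cdot b_i$.

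To finish, I would sum over $i$ and plug in the bound on $b_i$: using $\sum_i a_i^2\leq (\max_j a_j)\sum_i a_i\leq d^2|A|$,
\[
\sum_i b_i\leq \frac{1}{d(d-1)}\sum_i a_i^2\leq \frac{d}{d-1}\cdot|A|,
\]
so $e(G)\leq |A|+d\sum_i b_i\leq\bigl(1+d^2/(d-1)\bigr)|A|\leq 9d\cdot|A|$ for $d\geq 3$, giving $d(G)\leq 2e(G)/(|A|+|B|)\leq 18d$. There is no deep obstacle here; the only mild subtlety is pairing the two $C_4$-free cherry counts so that the $\sqrt{a_i}$ factor coming from the KST-style inequality is absorbed using $a_i\leq d^2$, after which both $e_i$ and $b_i$ can be controlled through the single aggregate $\sum_i a_i^2\leq d^2|A|$.
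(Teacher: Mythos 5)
Your proof is correct and follows essentially the same route as the paper: for each part $A_i$ you first use the degree hypothesis plus a codegree (cherry) count to bound the number of $B$-vertices attached to $A_i$, then bound the edges of that piece by a $C_4$-free (K\H{o}v\'ari--S\'os--Tur\'an/Erd\H{o}s--R\'enyi--S\'os type) estimate using $|A_i|\leq d^2$, and sum over $i$. The only differences are cosmetic: you derive the asymmetric bound $e_i\leq a_i+b_i\sqrt{a_i}$ inline rather than citing the Erd\H{o}s--R\'enyi--S\'os Tur\'an bound, and you aggregate via $\sum_i a_i^2\leq d^2|A|$ instead of bounding each piece separately.
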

\begin{proof}
 Fix an arbitrary $i\in [m]$. Let $B'$ be the set of vertices $b\in B$ with $|N(b)\cap A_i|\geq d$. As $G$ is $C_4$-free, every pair of vertices in $A_i$ has at most 1 common neighbour in $B$. Thus, as each vertex in $B'$ is adjacent to at least $\binom d2$ different pairs of vertices in $A_i$, we have $\binom {|A_i|}2\geq |B'|\binom d2$. Combining this with $|A_i|\leq d^2$ gives $|B'|\leq \frac{d}{d-1}(|A_i|-1)\leq 2|A_i|$. 
 Since $G$ is $C_4$-free, by the bound of Erd\H{o}s-R\'enyi-Sos~\cite{EH1}, we have 
 \[
 e(A_i, B')\leq \frac{(|A_i|+|B'|)^{3/2}}{2}+\frac {|A_i|+|B'|}4\leq (3|A_i|)^{3/2}.
 \]
 Thus, using $|A_i|\leq d^2$, we get $e(A_i, B)=e(A_i, B')\leq (3|A_i|)^{3/2}\leq  9d|A_i|$.
 
Summing this for all $i\in [m]$, we get $e(G)=\sum_{i=1}^me(A_i,B)\leq 9d|A|$ and hence $d(G)\leq 2e(G)/|A|\leq 18d$.
\end{proof}

Using this lemma, we can now prove Lemma~\ref{Lemma_partite_graph_degenerate}.

\begin{proof}[Proof of Lemma~\ref{Lemma_partite_graph_degenerate}]
For an edge $ab$ with $a\in A_i, b\in B_j$, we say that $ab$ is Type 1 if $|N(a)\cap B_j|\geq d$, Type 2 if $|N(b)\cap A_i|\geq d$, and Type 3 if  neither of these occur. (Note $ab$ may be both Type 1 and Type 2). 
Let $G_1$, $G_2$, and $G_3$ be the subgraphs of $G$ with vertex set $V(G)$  consisting of Type $1, 2$, and $3$ edges respectively.  

Notice that Lemma~\ref{Lemma_partite_graph_high_degree} applies to $G_1$ and $G_2$, so that $d(G_1), d(G_2)\leq 18d$. We claim that $G_3$ is $kd$-degenerate. That is, every subgraph of $G_3$ has a vertex with degree at most $kd$, and thus, by an easy induction, $d(G_3)\leq 2kd$. Therefore, if we can show $G_3$ is $kd$-degenerate, we will have $d(G)\leq d(G_1)+d(G_2)+d(G_3)\leq 36d+2kd\leq 13kd$.

To show $G_3$ is $kd$-degenerate, let $G'$ be an arbitrary subgraph of $G_3$. Let $H'$ be the subgraph of $H$ with $a_ib_j$ an edge exactly if $e_{G'}(A_i, B_j)>0$. Since $H$ is $k$-degenerate, there is a vertex of degree $\leq k$ in $H'$, say $a_i$.
Let $v\in A_i\cap V(G')$. 
Since $d_{H'}(a_i)\leq k$, there are at most $k$ sets $B_j$ with $N_{G'}(v)\cap B_j\neq \emptyset$.
By the definition of $G_3$, $|N_G(v)\cap B_j|\leq d$ for all $j$.  Thus, $d_G'(v)\leq kd$. That is, $G'$ has a vertex with degree at most $kd$, as required.
\end{proof}

We now prove the main result of this section.
\begin{proof}[Proof of Theorem~\ref{exampleC4free}] Note that it is sufficient to find, for all $d\geq 4$, a graph with average degree at least $d^3$ and no $C_4$-free subgraph with average degree at most $15cd\log d$, where $c$ is some fixed constant.

Let $c$ be such that, by Corollary~\ref{Corollary_irregular_graph}, we can find a graph $H$ satisfying the conditions in that corollary. Let the bipartition classes of $H$ be $\{a_1, \dots, a_m\}$ and $\{b_1, \dots, b_m\}$. Let $G$ be the graph formed from $H$ by replacing each  $a_i$ and $b_j$ by a set  $A_i$ and $B_j$ of $d^2$ vertices, respectively, and by replacing each edge $a_ib_j$ of $H$ by a complete bipartite graph $K_{d^2, d^2}$ between $A_i$ and $B_j$. Let $A=A_1\cup \dots \cup A_m$ and $B=B_1\cup \dots \cup B_m$, and note that these sets have size $d^2m$. We have $e(G)=d^4\cdot e(H)= d^5m$, and hence $d(G)= 2e(G)/(2d^2m)=d^3$. 

Now, let $G'$ be a $C_4$-free subgraph of $G$. We will show that $d(G')\leq 15cd\log d$, completing the proof. For this, let $H_1$ be the subgraph of $H$ formed by including the edge $a_ib_j$ exactly if the subgraph
$G'[A_i\cup B_j]$ has a vertex in $A_i$ with at least two neighbours in $B_j$. Let $H_2=H\setminus H_1$. Let $G_1$ and $G_2$ be the subgraphs of $G'$ on vertex set $V(G')$ formed from the union of the pairs $G'[A_i\cup B_j]$ corresponding to edges $a_ib_j$ in $H_1$ and $H_2$ respectively. 

Similarly to the simple argument in the proof of Lemma~\ref{Lemma_partite_graph_high_degree}, since $G'$ is $C_4$-free, for each $j$ there are at most $\binom{|B_j|}2\leq d^4$ vertices $a\in A$ with $\geq 2$ neighbours in $B_j$. Therefore, $d_{H_1}(b_j)\leq d^4$.
From the properties of $H$ from Corollary~\ref{Corollary_irregular_graph}, we have $d_H(a_i)=d$ for all $i\in [m]$, and therefore $\Delta(H_1)\leq d^4$. Again, by the properties of $H$, we have that $d(H_1)<c\log d$. This holds also for any subgraph of $H_1$, since it as has maximum degree at most $d^4$ as well. Hence $H_1$ is $(c\log d)$-degenerate. By Lemma~\ref{Lemma_partite_graph_degenerate}, therefore, $d(G_1)\leq 13cd\log d$.
Notice that, by the definition of $G_2$, each $a\in A_i$ has $d_{G_2}(a)\leq d_H(a_i)=d$. Indeed, by definition, $a$ has in $G_2$ at most one neighbour in every set $B_j$ with $a_ib_j \in E(H)$. Therefore, $d(G_2)\leq 2d$, and hence $d(G')=d(G_1)+d(G_2)\leq 13cd\log d+2d\leq 15cd\log d$, as required.
\end{proof}

\section{Concluding remarks}
\begin{itemize}
\item Denote by $f_6(t)$ the smallest integer such that every graph $G$ with average degree at least $f_6(t)$ contains a subgraph with girth at least 6 and average degree at least $t$. In this paper, we studied the growth rate of $f_6(t)$ and proved that it is at most exponential in $t^{2+o(1)}$.
The best lower bound that we obtained is roughly cubic. It would be very interesting to close this gap further and, in particular, to decide whether  $f_6(t)$ is polynomial in $t$.
\item The next open case of Thomassen's conjecture remains a fascinating, and seemingly very difficult problem. That is, do graphs with large average degree always contain subgraphs with large average degree and girth at least 8?
\item
A related question, posed by Erd\H{o}s and Hajnal \cite{EH1, EH2} in the 1960's, asks whether, for every $k$ and $g$, there is a function $\chi(k,g)$ such that any graph with chromatic number at least $\chi(k,g)$ contains a subgraph with chromatic number at least $k$ and girth greater than $g$. 
Unlike Thomassen's conjecture, here the case $g=3$ is already highly nontrivial, though it was solved by R\"odl~\cite{Ro} using a very elegant argument. All other cases remain open.
\end{itemize}

\end{document}